\definecolor{darkgreen}{rgb}{0,.6,0}
\definecolor{darkred}{rgb}{.6,0,0}
\definecolor{darkblue}{rgb}{0,.2,.5}
\title[A max. Boolean sublattice not the range of a Banasch. function]{A maximal Boolean sublattice that is not the range of a Banaschewski function}
\author[S. Mokri\v{s}]{Samuel Mokri\v{s}}
\author[P. R\r{u}\v{z}i\v{c}ka]{Pavel R\r{u}\v{z}i\v{c}ka}
\address{
  Department of Algebra\\
	Faculty of mathematics and Physics\\
	Charles University in Prague\\
  Sokolovsk\'a 83\\
  186 75, Prague\\
  Czech republic
}
\email[S. Mokri\v{s}]{smokris@seznam.cz}
\email[P. R\r{u}\v{z}i\v{c}ka]{ruzicka@karlin.mff.cuni.cz}
\thanks{The first author was partially supported by the project SVV-2015-260227 of Charles University
in Prague. The second author was partially supported by the Grant Agency of the Czech Republic under 
the grant no. GACR 14-15479S}
\subjclass[2010]{06A15, 06C20, 06D75}
\keywords{lattice, complemented, modular, Boolean, Banaschewski function, Schmidt's construction, closure operator, adjunction}
\date{\today}
\dedicatory{Dedicated to J\'{a}ra Cimrman on the occasion of his 50th birthday.}
\numberwithin{equation}{section}
\theoremstyle{plain}                                     
\newtheorem{lemma}{Lemma}[section]
\newtheorem{theorem}[lemma]{Theorem}
\newtheorem{proposition}[lemma]{Proposition}
\newtheorem{corollary}[lemma]{Corollary}
\newtheorem*{problem}{Problem}
\theoremstyle{remark}
\newtheorem{remark}[lemma]{Remark}    
\theoremstyle{definition}
\newtheorem{definition}[lemma]{Definition}
\newcommand{\setof}[2]{\{ #1 \mid #2 \}}
\newcommand{\tuple}[1]{\left\langle #1 \right\rangle}        
\newcommand{\lattice}[1]{\ensuremath{{\boldsymbol{#1}}}}     
\newcommand{\vectorspace}[1]{\ensuremath{\boldsymbol{#1}}}   
\newcommand{\ring}[1]{\ensuremath{#1}}                       
\newcommand{\meansymbol}{\mu}
\newcommand{\mean}[1]{\ensuremath{\meansymbol{#1}}}   
\newcommand{\closure}[1]{\ensuremath{\overline{#1}}}  
\newcommand{\Sub}{\mathrm{Sub}}
\newcommand{\Span}{\mathrm{Span}}
\newcommand{\ABC}[1][]{\tuple{A#1, B#1, C#1}}
\newcommand{\abc}[1][]{\tuple{a#1, b#1, c#1}}
\newcommand{\AC}[1][]{\tuple{A#1, C#1}}
\newcommand{\kompl}[1]{\kappa \setminus {#1}}
\newcommand{\Pk}{\ensuremath{\mathcal{P} (\kappa)}}
\newcommand{\Fk}{\ensuremath{\mathcal{F} (\kappa)}}
\begin{document}

\begin{abstract} We construct a countable bounded sublattice of the lattice of all subspaces of a vector space with
two non-isomorphic maximal Boolean sublattice. We represent one of them as the range of a Banschewski function and we
prove that this is not the case of the other. Hereby we solve a problem of F. Wehrung. 
\end{abstract}

\maketitle

\section{Introduction}\label{Introduction} 

In \cite{Weh10} Friedrich Wehrung defined a \emph{Banaschewski function} on a bounded complemented lattice $\lattice{L}$ as an antitone (i.e., order-reversing) map sending each element of $\lattice{L}$ to its complement, being motivated by the earlier result of Bernhard Banaschewski that such a function exists on the lattice of all subspaces of a vector space \cite{Ban57}. Wehrung extended Banaschewski's result by proving that every countable complemented modular lattice has a Banaschewski function with a Boolean range and that all the possible ranges of Banaschewski functions on $\lattice{L}$ are isomorphic \cite[Corollary 4.8]{Weh10}.  

Still in \cite{Weh10} Wehrung defined a ring-theoretical analogue of Banaschewski function that, for a von Neuman regular ring $R$, is closely connected to the lattice-theoretical Banaschewski function on the lattice $\lattice{L}(\ring{R})$ of all finitely generated right ideals of $\ring{R}$. He made use of these ideas  to construct a unit-regular ring $\ring{S}$ (in fact of bounded index $3$) of size $\aleph_1$ with no Banaschewski function \cite{Weh11}. 
 
Furthermore in \cite{Weh10} Wehrung defined notions of a Banaschewski measure and a Banaschewski trace on sectionally complemented modular lattices and he proved that a sectionally complemented lattice which is either modular with a large $4$-frame or Arguesian with a large $3$-frame is coordinatizable (i.e. isomorphic to $\lattice{L}(\ring{R})$ for a possibly non-unital von Neumann regular ring $\ring{R}$) if and only if it has a Banaschewski trace. Applying this results, he constructed a non-coordinatizable sectionally complemented modular lattice, of size $\aleph_1$, with a large $4$-frame \cite[Theorem 7.5]{Weh10}. 

The aim of our paper is to solve the second problem from \cite{Weh10}:	

\begin{problem}[Problem 2 from \cite{Weh10}]\label{prob_2}
	Is every maximal Boolean sublattice of an at most countable complemented modular lattice $\lattice{L}$ the range of some Banaschewski function on $\lattice{L}$? Are any two such Boolean sublattices isomorphic?
\end{problem}

We construct a countable complemented modular lattice $\lattice{S}$ with two non-isomorphic maximal Boolean sublattices $\lattice{B}$ and $\lattice{E}$. We represent $\lattice{E}$ as the range of a Banaschewski function on $\lattice{S}$ and we prove that $\lattice{B}$ is not the range of any Banaschewski function. Finally we represent the lattice $\lattice{S}$ as a bounded sublattice of the subspace-lattice of a vector space.  

 \section{Basic concepts}\label{Basic concepts} 

We start with recalling same basic notions as well as the precise definition of the Banaschewski function adopted from \cite{Weh10}. Next we outline the
Schmidt's $\lattice{M}_3[\lattice{L}]$ construction, which we then apply to define the bounded modular lattice $\lattice{S}$ containing a pair of non-isomorphic maximal Boolean sublattices.   

\subsection{Some standard notions, notation, and the Banaschewski function} A lattice $\lattice{L}$ is \emph{bounded} if it has both the least element and the greatest element, denoted by $0_\lattice{L}$ and $1_\lattice{L}$, respectively. A \emph{bounded sublattice} of a bounded lattice is its sublattice containing the bounds. Given elements $a,b,c$ of a lattice $\lattice{L}$ with zero, we will use the notation $c = a \oplus b$ when $a \wedge b = 0_\lattice{L}$ and $a \vee b = c$. A \emph{complement} of an element $a$ of a bounded lattice $\lattice{L}$ is an element $a'$ of $\lattice{L}$ such that $a \oplus a' = 1_\lattice{L}$. A lattice $\lattice{L}$ is said to be \emph{complemented} provided that it is bounded and each element of $\lattice{L}$ has a (not necessarily unique) complement. A lattice $\lattice{L}$ is \emph{relatively complemented} if each of its interval is complemented. Note that a relatively complemented lattice is not necessarily bounded.

We say that a lattice $\lattice{L}$ is \emph{uniquely complemented} if it is bounded and each element of $\lattice{L}$ has a unique complement. By a \emph{Boolean lattice} we mean a lattice reduct of a Boolean algebra, that is, a distributive uniquely complemented lattice. For the clarity, let us recall the formal definition of the Banaschewski function \cite[Definition 3.1]{Weh10}:

\begin{definition} Let $\lattice{L}$ be a bounded lattice. A \emph{Banaschewski function} on $\lattice{L}$ is a map 
$f \colon \lattice{L}\to \lattice{L}$ such that both
	\begin{enumerate}
		\item $x \leq y$ implies $f(x) \geq f(y)$, for all $x,y \in \lattice{L}$, and 
		\item $f(x) \oplus x = 1_{\lattice{L}}$ for all $ x \in \lattice{L}$,
	\end{enumerate}
hold true.	
\end{definition}

\subsection{The $\lattice{M}_3[\lattice{L}]$-construction.} Let $\lattice{L}$ be a lattice. We will call a triple $\abc \in \lattice{L}^3$ {\em balanced}, if it satisfies 
\begin{equation*}
a \wedge b = a \wedge c = b \wedge c
\end{equation*} 
and we denote by $\lattice{M}_3[\lattice{L}]$ the set of all balanced triples. It is readily seen that $\lattice{M}_3[\lattice{L}]$ is a meet-subsemilattice of the cartesian product $\lattice{L}^3$. However, it is not necessarily a join-subsemilattice, for one easily observes that the join of balanced triples may not be balanced. The $\lattice{M}_3[\lattice{L}]$-construction was introduced by E. T. Schmidt \cite{Sch68,Sch74} for a bounded distributive lattices $\lattice{L}$. He proved \cite[Lemma 1]{Sch74} that in this case $\lattice{M}_3[\lattice{L}]$ is a bounded modular lattice and that it is a congruence-preserving extension of the distributive lattice $\lattice{L}$. This result was later extended by Gr\"atzer and Schmidt in various directions \cite{GS95,GS03}. In particular, in \cite{GS95} they proved that every lattice with a non-trivial distributive interval has a proper congruence-preserving extension. This was further improved by Gr\"atzer and Wehrung in \cite{GW99d}, where they introduced a modification of the $\lattice{M}_3[\lattice{L}]$-construction, called $\lattice{M}_3\langle\lattice{L}\rangle$-construction. Using this new idea they proved that every non-trivial lattice admits a proper congruence-preserving extension.     

 The lattice constructions $\lattice{M}_3[\lattice{L}]$ and $\lattice{M}_3\langle \lattice{L} \rangle$ appeared in the series of papers by Gr\"atzer and Wehrung \cite{GW99a,GW99b,GW99c,GW99d,GW99e,GW00,GW01b} dealing with semilattice tensor product and its related structures, namely the box product and the lattice tensor product \cite[Definition 2.1 and definition 3.3]{GW99c}. Indeed, $\lattice{M}_3 \boxtimes \lattice{L} \simeq \lattice{M}_3 \langle \lattice{L} \rangle$ for every lattice $\lattice{L}$ and $\lattice{M}_3 \otimes \lattice{L} \simeq \lattice{M}_3[\lattice{L}]$ whenever $\lattice{L}$ has zero and $\lattice{M}_3 \otimes \lattice{L}$ is a lattice (see \cite[Theorem 6.5]{GW01b} and \cite[Corollary 6.3]{GW99b}). In particular, the latter is satisfied when the lattice $\lattice{L}$ is modular with zero. Note also, that if $\lattice{L}$ is a bounded distributive lattice both the constructions $\lattice{M}_3[\lattice{L}]$ and $\lattice{M}_3\langle \lattice{L} \rangle$ coincide. In our paper we get by with this simple case. 

Let \lattice{L} be a distributive lattice. Given a triple $\abc \in \lattice{L}^3$, we define
\begin{equation*}
\mean{\abc} = (a \wedge b) \vee (a \wedge c) \vee (b \wedge c) 
\end{equation*}
and we set
\begin{equation}\label{equation:closure}
\closure{\abc} = \tuple{a \vee \mean{\abc}, b \vee \mean{\abc}, c \vee \mean{\abc}}.
\end{equation}
Using the distributivity of $\lattice{L}$ one easily sees that $\closure{\abc}$ is the least balanced triple $\geq \abc$ in $\lattice{L}^3$ and that the map $\closure{\tuple{-}} \colon \lattice{L}^3 \to \lattice{L}^3$ determines a closure operator on the lattice $\lattice{L}^3$ (see \cite[Lemma 2.3]{Weh10} 
for a refinement of this observation). It is also clear that
\begin{equation*}
\begin{array}{ll}
a \vee \mean{\abc} &= a \vee (b \wedge c), \\
b \vee \mean{\abc} &= b \vee (a \wedge c), \\
c \vee \mean{\abc} &= c \vee (a \wedge b).
\end{array}
\end{equation*}
A triple $\abc \in \lattice{L}^3$ is closed with respect to the closure operator if and only if it is balanced. Therefore the set of all balanced triples, denoted by $\lattice{M}_3[\lattice{L}]$, forms a lattice \cite[Lemma 2.1]{Weh10}, where
\begin{equation} \label{equation:join}
\abc \vee \abc['] = \closure{\tuple{a \vee a', b \vee b', c \vee c'}}  
\end{equation}
and
\begin{equation}\label{equation:meet} 
\abc \wedge \abc['] = \tuple{a \wedge a', b \wedge b', c \wedge c'}.  
\end{equation}
By \cite[Lemma 2.9]{GW99b} the lattice $\lattice{M}_3[\lattice{L}]$ is modular if and only if the lattice $\lattice{L}$ is distributive. The ``if'' part of the equivalence is included in the above mentioned \cite[Lemma 1]{Sch74}. 

\section{The lattice}\label{The lattice} 

Fix an infinite cardinal $\kappa$. As it is customary, we identify $\kappa$ with the set of all ordinals of cardinality less than $\kappa$. Let us denote 
by $\Pk$ the Boolean lattice of all subsets of $\kappa$ and set
\begin{equation*}
	\Fk := \setof{X \subseteq \kappa}{ X \text{ is finite or } \kappa \setminus X \text{ is finite}}.
\end{equation*}
It is well-known that $\Fk$ is a bounded Boolean sublattice of $\Pk$. Next, let us define
\begin{equation*}
  \lattice{T} = \setof{ \ABC \in \Fk^3 }{ C \setminus \mean{\ABC} \text{ is finite}}.
\end{equation*}  
 
\begin{lemma}\label{lemma:V-lattice} The set \lattice{T} forms a bounded join-subsemilattice of $\Fk^3$. 
\end{lemma}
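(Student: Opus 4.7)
The plan is to verify two things about $\lattice{T}$: that it contains the bounds of $\Fk^3$, and that it is closed under the coordinatewise join of $\Fk^3$. Since $\Fk$ is a bounded Boolean sublattice of $\Pk$, it is closed under finite unions, so membership in $\Fk^3$ is automatic once one joins two elements of $\Fk^3$. The only real content is therefore the preservation of the defining finiteness condition $|C \setminus \mean{\ABC}| < \aleph_0$.

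For the bounds, both the zero triple $\tuple{\emptyset, \emptyset, \emptyset}$ and the unit triple $\tuple{\kappa, \kappa, \kappa}$ trivially lie in $\lattice{T}$: in the first case $C = \emptyset$, while in the second $\mean{\tuple{\kappa,\kappa,\kappa}} = \kappa$, so in both instances $C \setminus \mean{\ABC} = \emptyset$.

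For closure under joins, the key observation is that the operator $\mean{\cdot}$ is isotone in each coordinate, since $\mean{\ABC} = (A \cap B) \cup (A \cap C) \cup (B \cap C)$ is built entirely from coordinatewise meets and joins. Thus, given $\ABC[_1], \ABC[_2] \in \lattice{T}$ and writing $\ABC := \tuple{A_1 \cup A_2, B_1 \cup B_2, C_1 \cup C_2}$ for their join in $\Fk^3$, one has $\mean{\ABC[_i]} \subseteq \mean{\ABC}$ for $i = 1, 2$. Consequently,
\[
  C_i \setminus \mean{\ABC} \;\subseteq\; C_i \setminus \mean{\ABC[_i]},
\]
and the right-hand side is finite by hypothesis. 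Taking the union over $i$ expresses $C \setminus \mean{\ABC} = (C_1 \cup C_2) \setminus \mean{\ABC}$ as a union of two finite sets, hence finite, and so $\ABC \in \lattice{T}$.

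I do not anticipate any genuine obstacle; the entire argument is forced by monotonicity of $\mean{\cdot}$ together with finite stability of $\Fk$. The asymmetry in the statement---that only a \emph{join}-subsemilattice is claimed---signals that meets in $\Fk^3$ will behave less well with respect to the defining finiteness condition, and hence the eventual lattice structure on a set like $\lattice{T}$ will presumably have to be recovered via the $\lattice{M}_3[\cdot]$-construction outlined in the preceding section rather than within $\Fk^3$ directly.
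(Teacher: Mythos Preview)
Your proof is correct and follows essentially the same approach as the paper: both arguments rest on the monotonicity of the polynomial map $\meansymbol$ to bound $(C_1\cup C_2)\setminus\mean{\tuple{A_1\cup A_2,B_1\cup B_2,C_1\cup C_2}}$ by the union $(C_1\setminus\mean{\ABC[_1]})\cup(C_2\setminus\mean{\ABC[_2]})$, and then check the bounds explicitly. The only cosmetic difference is that the paper routes the inclusion through the intermediate set $(C_1\cup C_2)\setminus(\mean{\ABC[_1]}\cup\mean{\ABC[_2]})$, whereas you split the computation over the two indices first; the content is identical.
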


\begin{proof} Being a lattice polynomial, the map $\meansymbol \colon \Pk^3 \to \Pk$ is monotone. It follows that for all $\ABC$, $\ABC['] \in \Pk$, the inclusion
	\begin{equation*}
		\mean{\tuple{A \cup A', B \cup B',C \cup C'}} \supseteq \mean{\ABC} \cup \mean{\ABC[']}
	\end{equation*}
	holds, whence also
	\begin{align*}
	& \big( C \cup C' \big) \setminus \mean{\tuple{A \cup A',B \cup B',C \cup C'}} \\ 
	\subseteq & \big( C \cup C' \big) \setminus \big( \mean{\ABC} \cup \mean{\ABC[']} \big) \\ 
	\subseteq & \big( C \setminus \mean{\ABC} \big) \cup \big( C' \setminus \mean{\ABC[']} \big).
	\end{align*}		 
   Thus if both $C \setminus \mean{\ABC}$ and  $C' \setminus \mean{\ABC[']}$ are finite, then $(C \cup C') \setminus \mean{\tuple{A \cup A',B \cup B',C \cup C'}}$ 
	is finite as well. It follows that $\lattice{T}$ is join-subsemilattice of $\Fk^3$. Finally, it is clear that both $0_{\Fk^3} = \tuple{\emptyset,\emptyset,\emptyset}$ and $1_{\Fk^3} = \tuple{\kappa,\kappa,\kappa}$ belong to $\lattice{T}$.  
\end{proof}

 Let $\lattice{S} := \lattice{T} \cap \lattice{M}_3[\Fk]$ denote the set of all balanced triples from $\lattice{T}$.   

\begin{lemma}\label{lemma:Vclosureclosed} The join-semilattice $\lattice{T}$ is closed under the $\closure{\tuple{-}}$ operation. 
\end{lemma}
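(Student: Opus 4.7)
The plan is to take an arbitrary triple $\ABC \in \lattice{T}$ and verify that $\closure{\ABC} \in \lattice{T}$. By the displayed formulas following \eqref{equation:closure},
\begin{equation*}
\closure{\ABC} = \tuple{A \cup (B \cap C),\, B \cup (A \cap C),\, C \cup (A \cap B)};
\end{equation*}
each coordinate lies in $\Fk$ because $\Fk$ is a Boolean sublattice of $\Pk$, hence closed under finite intersections and unions, so $\closure{\ABC} \in \Fk^3$. Since membership in $\lattice{T}$ is a condition only on the third coordinate, what remains is to verify that $\bigl(C \cup (A \cap B)\bigr) \setminus \mean{\closure{\ABC}}$ is finite.

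The step that deserves the most care, and is the main point of the proof, is the identity $\mean{\closure{\ABC}} = \mean{\ABC}$, i.e., closure preserves the mean. A direct distributive computation does the job: $\closure{\ABC}$ is balanced, so its mean equals the intersection of any two of its coordinates, and
\begin{equation*}
\bigl(A \cup (B \cap C)\bigr) \cap \bigl(B \cup (A \cap C)\bigr) = (A \cap B) \cup (A \cap C) \cup (B \cap C) = \mean{\ABC},
\end{equation*}
the spurious $A \cap B \cap C$ term arising in the expansion being absorbed into $A \cap B$. Equivalently, monotonicity of $\mean{\tuple{-}}$ gives $\mean{\ABC} \subseteq \mean{\closure{\ABC}}$, and idempotence of the closure operator (recorded in the previous section) supplies the reverse inclusion.

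With this identification in hand the conclusion is immediate. The inclusion $A \cap B \subseteq \mean{\ABC}$ is built into the definition of the mean, so
\begin{equation*}
\bigl(C \cup (A \cap B)\bigr) \setminus \mean{\closure{\ABC}} \;=\; \bigl(C \cup (A \cap B)\bigr) \setminus \mean{\ABC} \;=\; C \setminus \mean{\ABC},
\end{equation*}
and this last set is finite by the hypothesis $\ABC \in \lattice{T}$. I do not anticipate any further difficulty: the definition of $\lattice{T}$ constrains only the third coordinate, and the extra piece $A \cap B$ picked up there under closure is precisely the sort of term absorbed into $\mean{\ABC}$, which is what keeps the bookkeeping clean.
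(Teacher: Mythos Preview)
Your proof is correct and follows essentially the same line as the paper's: verify that the three coordinates stay in $\Fk$, then use the relationship between $\mean{\closure{\ABC}}$ and $\mean{\ABC}$ to bound the finiteness condition on the third coordinate by $C \setminus \mean{\ABC}$. The only difference is cosmetic: the paper uses just the inclusion $\mean{\ABC} \subseteq \mean{\closure{\ABC}}$ (from monotonicity of $\meansymbol$) to get $(C \cup \mean{\ABC}) \setminus \mean{\closure{\ABC}} \subseteq C \setminus \mean{\ABC}$, whereas you establish the full equality $\mean{\closure{\ABC}} = \mean{\ABC}$ and then obtain an equality rather than an inclusion.
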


\begin{proof} Let $\ABC \in \lattice{T}$. Since $\Fk$ is a lattice, we have that all $A \cup \mean{\ABC}$, $B \cup \mean{\ABC}$ and $C \cup \mean{\ABC}$ belong to $\Fk$. Since the map $\meansymbol \colon \Pk^3 \to \Pk$ is monotone, the inclusion $\mean{\ABC} \subseteq \mean{\closure{\ABC}}$ holds. It follows that 
\begin{equation*} 
\big( C \cup \mean{\ABC} \big) \setminus \mean{\closure{\ABC}} \subseteq C \setminus \mean{\ABC},
\end{equation*}
which is finite due to $\ABC$ being an element of \lattice{T}. 
\end{proof}

\begin{lemma}\label{lemma:Usublattice} The set $\lattice{S}$ forms a bounded sublattice of the lattice $\lattice{M}_3[\Fk]$. 
\end{lemma}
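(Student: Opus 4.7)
The plan is to verify the three required closure properties: that $\lattice{S}$ contains $0_{\lattice{M}_3[\Fk]}$ and $1_{\lattice{M}_3[\Fk]}$, that it is closed under the meet operation of $\lattice{M}_3[\Fk]$, and that it is closed under the join operation of $\lattice{M}_3[\Fk]$. The bounds are immediate, since $\tuple{\emptyset,\emptyset,\emptyset}$ and $\tuple{\kappa,\kappa,\kappa}$ are obviously balanced and lie in $\lattice{T}$.

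For joins, the heavy lifting has already been done. Given $\ABC$, $\ABC['] \in \lattice{S}$, the join in $\lattice{M}_3[\Fk]$ is given by formula \eqref{equation:join}, namely $\closure{\tuple{A \cup A', B \cup B', C \cup C'}}$. By Lemma~\ref{lemma:V-lattice} the inner triple already belongs to $\lattice{T}$, and then Lemma~\ref{lemma:Vclosureclosed} shows that its closure also belongs to $\lattice{T}$. Since closed triples are by definition balanced, the resulting join lies in $\lattice{T} \cap \lattice{M}_3[\Fk] = \lattice{S}$.

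For meets, formula \eqref{equation:meet} tells us that the meet of $\ABC$ and $\ABC[']$ in $\lattice{M}_3[\Fk]$ is simply the componentwise meet $\tuple{A\cap A', B\cap B', C\cap C'}$, which is balanced (meets of balanced triples are balanced) and has all three coordinates in $\Fk$. It remains to verify membership in $\lattice{T}$, i.e.\ that $(C \cap C') \setminus \mean{\tuple{A\cap A',B\cap B',C\cap C'}}$ is finite. The quick calculation is: for balanced triples $\mean{\ABC} = A \cap B$ and likewise $\mean{\ABC[']} = A' \cap B'$, and for the meet $\mean{\tuple{A\cap A',B\cap B',C\cap C'}} = A\cap A'\cap B\cap B'$. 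If $x \in C \cap C'$ fails to lie in this intersection, then $x$ must miss one of $A,B,A',B'$; in the first two cases $x \in C \setminus \mean{\ABC}$, in the last two $x \in C' \setminus \mean{\ABC[']}$. Hence
\begin{equation*}
(C \cap C') \setminus \mean{\tuple{A\cap A',B\cap B',C\cap C'}} \subseteq \bigl(C \setminus \mean{\ABC}\bigr) \cup \bigl(C' \setminus \mean{\ABC[']}\bigr),
\end{equation*}
and the right-hand side is a union of two finite sets.

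The only genuinely computational step is this last set-inclusion for meets; the join step is essentially a reuse of the previous two lemmas, and the bounds are trivial. I do not anticipate a real obstacle here—the care needed is just keeping straight that $\mean{-}$ collapses to the common pairwise intersection on balanced triples, which is what turns the finiteness hypothesis into something directly comparable.
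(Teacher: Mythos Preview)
Your proof is correct and follows essentially the same route as the paper: both handle joins by invoking Lemmas~\ref{lemma:V-lattice} and~\ref{lemma:Vclosureclosed}, and both reduce the meet case to the inclusion $(C\cap C')\setminus \mean{\tuple{A\cap A',B\cap B',C\cap C'}}\subseteq (C\setminus \mean{\ABC})\cup(C'\setminus \mean{\ABC[']})$. The only cosmetic difference is that the paper states the slightly more general identity $\mean{\tuple{A\cap A',B\cap B',C\cap C'}}=\mean{\ABC}\cap\mean{\ABC[']}$ (valid when at least one triple is balanced) and then applies standard set algebra, whereas you specialise immediately to $\mean{\ABC}=A\cap B$ using that both triples are balanced and verify the inclusion by a short case analysis; these are equivalent computations.
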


\begin{proof} Applying Lemmas~\ref{lemma:V-lattice} and~\ref{lemma:Vclosureclosed}, we deduce that $\lattice{S}$ is a bounded join-subsemilattice of $\lattice{M}_3[\Fk]$. Therefore, it suffices to verify that $\lattice{S}$ is a meet-subsemilattice of $\Fk^3$. It is easy to observe that if at least one of $\ABC, \ABC['] \in \Pk^3$ is balanced, then
\begin{equation*}
\mean{\tuple{A \cap A',B \cap B',C \cap C'}} = \mean{\ABC} \cap \mean{\ABC[']}.
\end{equation*}
>From this we  we get that if $\ABC, \ABC['] \in \lattice{S}$, then
\begin{equation*}\begin{aligned} 
& \big(C \cap C' \big) \setminus \mean{\tuple{A \cap A',B \cap B',C \cap C'}} \\ 
= & \big(C \cap C' \big) \setminus \big(\mean{\ABC} \cap \mean{\ABC[']} \big) \\ 
\subseteq & \big( C \setminus \mean{\ABC} \big) \cup \big( C' \setminus \mean{\ABC[']} \big),
\end{aligned}\end{equation*}
so the set $(C\cap C')\setminus \mean{\tuple{A \cap A'. B \cap B', C \cap C'}}$ is finite. This concludes the proof.
\end{proof}

As discussed in Section~\ref{Basic concepts}, since the lattice $\Fk$ is distributive, the lattice $\lattice{M}_3[\Fk]$ is modular. Observe that the mapping $A \mapsto \tuple{A,A,A}$ embeds $\Fk$ into $\lattice{S}$, from which we deduce that 
\begin{equation*}
\vert \Fk \vert \le \vert \lattice{S} \vert \le \vert \Fk^3 \vert.
\end{equation*}
Since the size of both $\Fk$ and $\Fk^3$ is $\kappa$, we get that $\vert \lattice S \vert = \kappa$. Let us sum up these observations in the following corollary to Lemma~\ref{lemma:Usublattice}.  

\begin{corollary}\label{corollary:Uboundedmodular} For $\kappa$ countable infinite, $\lattice{S}$ forms a countable bounded modular lattice. 
\end{corollary}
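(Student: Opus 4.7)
The statement is essentially a wrap-up of observations already assembled in the paragraph preceding it, so my plan is to verify each of the two claims (bounded modular, and countable) by pointing to the right piece of machinery, rather than redoing any calculation.

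First I would handle the \emph{bounded modular} part. Lemma~\ref{lemma:Usublattice} tells us that $\lattice{S}$ is a bounded sublattice of $\lattice{M}_3[\Fk]$. By the discussion in Section~\ref{Basic concepts} (specifically the citation to \cite[Lemma~2.9]{GW99b}, and the classical result of Schmidt), $\lattice{M}_3[\lattice{L}]$ is modular whenever $\lattice{L}$ is distributive. Since $\Fk$ is a Boolean lattice, it is in particular distributive, so $\lattice{M}_3[\Fk]$ is modular. Modularity is inherited by sublattices, hence $\lattice{S}$ is a bounded modular lattice. No further work is needed for this half.

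Next I would handle the \emph{countability} part by a cardinality sandwich. The paragraph just before the corollary points out that the diagonal map $A \mapsto \tuple{A,A,A}$ embeds $\Fk$ into $\lattice{S}$ (the image clearly lies in $\lattice{T}$ since $A \setminus \mean{\tuple{A,A,A}} = A \setminus A = \emptyset$, and the triple is balanced), while $\lattice{S} \subseteq \Fk^3$ by construction, giving
\begin{equation*}
  |\Fk| \le |\lattice{S}| \le |\Fk^3|.
\end{equation*}
Thus it suffices to check that $|\Fk| = \kappa$ when $\kappa = \aleph_0$. This is standard: the finite subsets of a countable set form a countable collection, the cofinite subsets are in bijection with the finite ones via complementation, and their union is $\Fk$. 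So $|\Fk| = \aleph_0$, whence $|\Fk^3| = \aleph_0$ and the squeeze yields $|\lattice{S}| = \aleph_0$.

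There is no real obstacle here; the whole corollary is a bookkeeping step that records consequences of Lemma~\ref{lemma:Usublattice} together with the prior remarks about modularity of $\lattice{M}_3[\Fk]$ and the cardinality of $\Fk$. The only point where one should be a touch careful is checking that the diagonal embedding actually lands inside $\lattice{S}$ (balancedness and the finiteness condition on $C\setminus \mean{\ABC}$), but both are immediate.
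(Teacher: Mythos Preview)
Your proposal is correct and follows essentially the same route as the paper: the paragraph immediately preceding the corollary already records that $\lattice{S}$ is a bounded sublattice of the modular lattice $\lattice{M}_3[\Fk]$ (via Lemma~\ref{lemma:Usublattice} and the distributivity of $\Fk$), and then sandwiches $|\lattice{S}|$ between $|\Fk|$ and $|\Fk^3|$ using the diagonal embedding $A\mapsto\tuple{A,A,A}$. The corollary is presented in the paper merely as a summary of those observations, so your write-up matches it in both content and structure.
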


\begin{remark} Note that unlike $\lattice{S}$, the lattice $\lattice{T}$ is not a meet-subsemillatice of $\Fk^3$. Indeed, both $\tuple{\kappa,\emptyset,\kappa}, \tuple{\emptyset,\kappa,\kappa} \in \lattice{T}$ while $\tuple{\kappa,\emptyset,\kappa} \wedge \tuple{\emptyset,\kappa,\kappa} = \tuple{\emptyset,\emptyset,\kappa} \notin \lattice{T}$.
\end{remark}

\section{A Banaschewski function on \lattice{S}}\label{Banschewski function on U} 

In this section we define a Banaschewski function $f \colon \lattice{S} \to \lattice{S}$ and describe, element-wise, its range. 

\begin{lemma}\label{lemma:fBanaschewski} The map $f \colon \lattice{S} \to \lattice{S}$ defined by
\begin{equation}\label{equation:definitionoff}
f\ABC := \tuple{\kappa \setminus A, \kappa \setminus (B \cup C), \kappa \setminus (A\cup B\cup C)}, \quad\text{for all}\ \ABC \in \lattice{S},
\end{equation}
is a Banaschewski function on \lattice{S}.
\end{lemma}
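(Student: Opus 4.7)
The plan is to verify the three requirements for a Banaschewski function separately: that $f$ maps $\lattice{S}$ into itself, that $f$ is antitone, and that $f\ABC$ is a complement of $\ABC$ in $\lattice{S}$ (i.e.\ their meet is $\tuple{\emptyset,\emptyset,\emptyset}$ and their join is $\tuple{\kappa,\kappa,\kappa}$).

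First, I would check that $f\ABC \in \lattice{S}$. All three coordinates of $f\ABC$ are complements (with respect to $\kappa$) of elements of $\Fk$, hence lie in $\Fk$ since $\Fk$ is a Boolean sublattice of $\Pk$. For the balancedness condition, a short set-theoretic computation shows that all three pairwise intersections of the coordinates of $f\ABC$ collapse to the third coordinate $\kappa\setminus(A\cup B\cup C)$; consequently $\mean{f\ABC}=\kappa\setminus(A\cup B\cup C)$ coincides with the third coordinate, so the difference appearing in the definition of $\lattice{T}$ is empty. Therefore $f\ABC\in\lattice{T}\cap \lattice{M}_3[\Fk]=\lattice{S}$.

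Antitonicity is then immediate: the assignments $A\mapsto\kappa\setminus A$, $\tuple{B,C}\mapsto\kappa\setminus(B\cup C)$, and $\tuple{A,B,C}\mapsto\kappa\setminus(A\cup B\cup C)$ are all order-reversing componentwise, and the order on $\lattice{S}$ is inherited componentwise from $\Fk^3$. The meet $f\ABC\wedge\ABC$ is computed componentwise by \eqref{equation:meet}, and each component vanishes thanks to the inclusions $B\subseteq B\cup C$ and $C\subseteq A\cup B\cup C$.

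The main obstacle is the join computation, where the closure operator in \eqref{equation:join} must be applied. Setting $B^{*}:=B\cup(\kappa\setminus(B\cup C))$ and $C^{*}:=C\cup(\kappa\setminus(A\cup B\cup C))$, the componentwise union of $\ABC$ and $f\ABC$ is $\tuple{\kappa,B^{*},C^{*}}$. Evaluating the mean and using the fact that the first coordinate absorbs everything, I would show
\begin{equation*}
\mean{\tuple{\kappa,B^{*},C^{*}}} = B^{*}\cup C^{*}.
\end{equation*}
The key observation is that $B^{*}\cup C^{*}\supseteq (B\cup C)\cup(\kappa\setminus(B\cup C))=\kappa$, so the closure of $\tuple{\kappa,B^{*},C^{*}}$ is $\tuple{\kappa,\kappa,\kappa}=1_{\lattice{S}}$. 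Combined with the previous steps, this gives $f\ABC\oplus\ABC=1_{\lattice{S}}$ and completes the verification.
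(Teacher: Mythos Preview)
Your proof is correct and follows essentially the same approach as the paper: both verify that $f\ABC$ has the form $\tuple{A',B',A'\cap B'}$ (so that the third coordinate equals the mean, giving balancedness and membership in $\lattice{S}$), note that antitonicity and the vanishing meet are immediate from the definition, and then compute the join by showing that the mean of the componentwise union is all of~$\kappa$. Your verification that $B^{*}\cup C^{*}\supseteq (B\cup C)\cup(\kappa\setminus(B\cup C))=\kappa$ is in fact a slightly cleaner phrasing of the same set-theoretic observation the paper uses.
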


\begin{proof} First we prove that $\lattice{S}$ contains the range of the map $f$. Observe that if we put $A' := \kompl A$ and $B' := \kompl {(B\cup C)} $, 
then $f\ABC = \tuple{A', B', A' \cap B'}$. Since \Fk\ is a Boolean lattice, the sets $A', B'$ and $A'\cap B'$ all belong to \Fk. Furthermore, we have that
	\begin{equation*}
		A'\cap B' = \mean{\tuple{A', B', A'\cap B'}} = \mean{f\ABC}.
	\end{equation*}
	In particular, $A'\cap B' \setminus \mean{f\ABC} = \emptyset $, whence $f\ABC \in \lattice{S}$.
	
	It is clear from \eqref{equation:definitionoff} that the map $f$ is antitone. Finally, we check that
\begin{equation*}
 1_{\lattice{S}} = \tuple{\kappa,\kappa,\kappa} = \ABC \oplus f\ABC, \quad\text{for all}\ \ABC \in \lattice{S}.
\end{equation*}
It follows immediately from the definition of $f$ that 
	\begin{equation*}
	\ABC\wedge f\ABC = \tuple{\emptyset, \emptyset, \emptyset} = 0_{\lattice{S}}.
	\end{equation*}
To prove that $\ABC \vee f\ABC = 1_\lattice{S}$, let us verify that	
\begin{equation}\label{equation:f a komplementy}
  \kappa = \mean{\tuple{A \cup (\kompl A), B \cup (\kompl {(B\cup C)}), C \cup (\kompl {(A\cup B\cup C)})}}.
\end{equation}
Note that each element of $\kappa$ that is \emph{not} contained in $C$ belongs to $ B\cup (\kompl {(B\cup C)})$. Together with $A\cup (\kompl A)=\kappa$, we get that \eqref{equation:f a komplementy} holds, which concludes the proof.
\end{proof}   
	
\begin{lemma}\label{lemma:E} Let \lattice{E} denote the range of the Banaschewski function $f \colon \lattice{S} \to \lattice{S}$. Then
\begin{equation*}
 		\lattice{E} = \setof{ \tuple{A, B, A \cap B}}{A, B \in \Fk}
\end{equation*}	
and the mapping 
\begin{equation}\label{Eq:mapping}
\tuple{A,B,A \cap B} \mapsto \tuple{A,B}
\end{equation} 
determines an isomorphism from $\lattice{E}$ onto the Boolean lattice $\Fk \times \Fk$.
\end{lemma}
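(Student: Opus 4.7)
The plan has two parts: first identify $\lattice{E}$ as a set, then exhibit the isomorphism. For the set-theoretic description, I would prove two inclusions. The inclusion $\lattice{E} \subseteq \{\tuple{A,B,A\cap B} : A,B \in \Fk\}$ follows directly from the formula \eqref{equation:definitionoff}: writing $A := \kompl{A_0}$ and $B := \kompl{(B_0\cup C_0)}$ for a triple $\tuple{A_0,B_0,C_0} \in \lattice{S}$, de Morgan gives $\kompl{(A_0\cup B_0\cup C_0)} = A \cap B$, so $f\tuple{A_0,B_0,C_0} = \tuple{A,B,A\cap B}$ with $A,B \in \Fk$. For the reverse inclusion, I would show that every $\tuple{A,B,A\cap B}$ with $A,B \in \Fk$ is in the range by applying $f$ twice. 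A direct substitution yields $f\tuple{A,B,A\cap B} = \tuple{\kompl A,\kompl B,\kompl{(A\cup B)}}$, which already lies in $\lattice{S}$ (it is trivially balanced and the third coordinate equals $\mean{\cdot}$, so the finiteness condition in the definition of $\lattice{T}$ is vacuous); applying $f$ once more and simplifying gives back $\tuple{A,B,A\cap B}$, so this triple is in the image of $f$.

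For the isomorphism claim, let $\phi \colon \lattice{E} \to \Fk \times \Fk$ be the projection in \eqref{Eq:mapping}. Bijectivity is clear, with inverse $\tuple{A,B}\mapsto\tuple{A,B,A\cap B}$. Meet-preservation is immediate from \eqref{equation:meet}, since the third coordinate of the meet of two elements of $\lattice{E}$ is automatically $(A_1\cap A_2)\cap (B_1\cap B_2)$. The only substantive point is join-preservation: here I would compute the closure in \eqref{equation:join} explicitly. With $a = A_1\cup A_2$, $b = B_1\cup B_2$, $c = (A_1\cap B_1)\cup (A_2\cap B_2)$, one checks that $c \subseteq a \cap b$, so $\mean{\tuple{a,b,c}} = a\cap b = (A_1\cup A_2)\cap(B_1\cup B_2)$, and therefore
\begin{equation*}
\tuple{A_1,B_1,A_1\cap B_1} \vee \tuple{A_2,B_2,A_2\cap B_2} = \tuple{A_1\cup A_2,\, B_1\cup B_2,\, (A_1\cup A_2)\cap(B_1\cup B_2)},
\end{equation*}
which $\phi$ sends to the componentwise union, as required.

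I do not expect any serious obstacle here; the only mildly nontrivial step is verifying that the middle term $c$ is absorbed into $a \cap b$ inside the closure, which is where the hypothesis that the third coordinates have the special form $A \cap B$ is really used. Everything else is a matter of unwinding the definitions of $f$, $\lattice{S}$, and of the lattice operations on $\lattice{M}_3[\Fk]$.
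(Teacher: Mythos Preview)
Your proposal is correct and follows essentially the same route as the paper: the inclusion $\lattice{E}\subseteq\{\tuple{A,B,A\cap B}\}$ via de~Morgan (the paper cites this from the proof of the preceding lemma), the reverse inclusion via $f\circ f = \mathrm{id}$ on such triples, and the isomorphism claim. You are simply more explicit about the join computation, which the paper dismisses as ``readily seen.''
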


\begin{proof} While proving Lemma~\ref{lemma:fBanaschewski}, we have observed that
\begin{equation}\label{equation:popis E}
	\lattice{E} \subseteq \setof{\ABC \in \lattice{S}}{C = A \cap B} = \setof{ \tuple{A', B', A'\cap B'}}{A', B' \in \Fk}.
\end{equation}
A straightforward computation gives that $f( f\tuple{A', B', A'\cap B'}) = \tuple{A', B', A'\cap B'}$, so the lattice \lattice{E} is equal to the right-hand side of \eqref{equation:popis E}. Finally, it is readily seen that the correspondence \eqref{Eq:mapping} determines an isomorphism $\lattice{E} \to \Fk \times \Fk$.  
\end{proof}

It was noted in \cite{Weh10} that if the range of a Banaschewski function on a lattice \lattice{L} is Boolean, then it is a \emph{maximal} Boolean sublattice of \lattice{L}. Thus we derive from Theorem~\ref{lemma:E} that \lattice{E} is a maximal Boolean sublattice of \lattice{B}.

\section{The counter-example} 

In the present section, we construct another maximal Boolean sublattice $\lattice{B}$ of the lattice $\lattice{S}$. We show that the lattices $\lattice{B}$ and $\lattice{S}$ are not isomorphic and we prove directly that the lattice $\lattice{B}$ is not the range of any Banaschewski function on \lattice{S}. 

\begin{lemma}\label{lemma:g}
	The assignment $\AC \mapsto g\AC := \tuple{A, A \cap C, C}$ defines a bounded lattice embedding $g \colon \Fk \times \Fk \to \lattice{M}_3[\Fk]$. In particular, the range of $g$ is a bounded Boolean sublattice of $\lattice{M}_3[\Fk]$ isomorphic to $\Fk \times \Fk$.
\end{lemma}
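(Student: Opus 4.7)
The plan is to verify the three requirements separately: that $g$ lands in $\lattice{M}_3[\Fk]$, that it preserves the bounded lattice operations, and that it is injective. Since the target lattice $\lattice{M}_3[\Fk]$ is modular and the source $\Fk \times \Fk$ is Boolean, once $g$ is a lattice embedding the image will automatically be a bounded Boolean sublattice isomorphic to $\Fk \times \Fk$.

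First I would check that every triple in the image is balanced: for $g\AC = \tuple{A, A \cap C, C}$ all three pairwise meets collapse to $A \cap C$, so $g\AC \in \lattice{M}_3[\Fk]$. The bounds are preserved because $g\tuple{\emptyset,\emptyset} = \tuple{\emptyset,\emptyset,\emptyset}$ and $g\tuple{\kappa,\kappa} = \tuple{\kappa,\kappa,\kappa}$. Preservation of meets is immediate from the coordinate-wise meet formula \eqref{equation:meet}, since $(A \cap C) \cap (A' \cap C') = (A \cap A') \cap (C \cap C')$, giving $g\AC \wedge g\AC['] = g\tuple{A \cap A', C \cap C'}$. Injectivity is obvious by looking at the first and third coordinates.

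The main step (and the only one requiring some computation) is preservation of joins: one must show
\begin{equation*}
\closure{\tuple{A \cup A',\, (A \cap C) \cup (A' \cap C'),\, C \cup C'}} = \tuple{A \cup A',\, (A \cup A') \cap (C \cup C'),\, C \cup C'}.
\end{equation*}
To evaluate the closure, write $X = A \cup A'$, $Y = (A \cap C) \cup (A' \cap C')$, $Z = C \cup C'$. Since $Y \subseteq X \cap Z$, the definition of $\meansymbol$ yields
\begin{equation*}
\mean{\tuple{X,Y,Z}} = (X \cap Y) \cup (Y \cap Z) \cup (X \cap Z) = X \cap Z = (A \cup A') \cap (C \cup C').
\end{equation*}
Substituting into the closure formula \eqref{equation:closure} and using $Y \subseteq X \cap Z \subseteq X$ as well as $X \cap Z \subseteq Z$, the first and third coordinates remain $X$ and $Z$, while the middle becomes $(A \cup A') \cap (C \cup C')$. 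This is exactly $g\tuple{A \cup A', C \cup C'}$, so $g$ preserves joins.

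The only even mildly tricky point is recognising $Y \subseteq X \cap Z$, which shortcuts the computation of $\mean{\tuple{X,Y,Z}}$; everything else is bookkeeping. Having established that $g$ is an injective bounded lattice homomorphism, its range is a bounded sublattice of $\lattice{M}_3[\Fk]$ isomorphic to the Boolean lattice $\Fk \times \Fk$, finishing the proof.
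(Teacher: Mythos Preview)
Your proof is correct and follows the same approach as the paper's: verify balancedness, injectivity, bounds, meet preservation via \eqref{equation:meet}, and join preservation via \eqref{equation:closure}--\eqref{equation:join}. The only difference is that where the paper merely asserts that \eqref{equation:JoinInB} ``can be easily deduced,'' you actually carry out the computation by observing $Y \subseteq X \cap Z$ and hence $\mean{\tuple{X,Y,Z}} = X \cap Z$; this is exactly the intended verification.
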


\begin{proof}
	It is clear from the definition of the map $g$ that it is injective and that its range is included in $\lattice{M}_3[\Fk]$. Further, for any $A, A', C, C'  \subseteq \kappa$, the equality
	\begin{equation*}
	 g\AC \wedge g\AC['] = g \tuple{A \cap A', C \cap C'}
	\end{equation*}
	holds by \eqref{equation:meet}, while
	\begin{equation}\label{equation:JoinInB}
		g\AC \vee g\AC['] = g \tuple{A\cup A', C\cup C'}
	\end{equation}
	can be easily deduced from \eqref{equation:closure} and \eqref{equation:join}.
	Finally, observe that $g\tuple{\kappa, \kappa} = \tuple{\kappa, \kappa, \kappa}$ and $g\tuple{\emptyset, \emptyset} = \tuple{\emptyset, \emptyset, \emptyset}$, which concludes the proof.
\end{proof}

For any $A, C \in \Fk$, we say that $\AC$ is \emph{finite} if both $A$ and $C$ are finite, and we say that $\AC$ is \emph{co-finite} if both $\kompl A$ and $\kompl C$ are finite. Let us write $A \sim C$ if $\AC$ is either finite or co-finite. Note that there are pairs $A, C \in \Fk$ such that $\AC$ is neither finite nor co-finite; namely, $A \sim C$ if and only if the symmetric difference $(A \setminus C) \cup (C \setminus A)$ is finite. 

\begin{lemma}\label{lemma:Asublattice} The set
	\begin{equation*}
   \lattice{A} = \setof{ \AC \in \Fk}{A \sim C}
   \end{equation*} 
	form a bounded Boolean sublattice of $\Fk \times \Fk$.
\end{lemma}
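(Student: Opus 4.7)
The plan is to translate the relation $A\sim C$ into a single finiteness condition on a symmetric difference, which reduces everything to standard set-theoretic estimates. First I would show that for $A,C\in\Fk$, one has $A\sim C$ if and only if the symmetric difference $A\triangle C:=(A\setminus C)\cup(C\setminus A)$ is finite. One direction is immediate. For the converse, since both $A$ and $C$ are themselves finite or co-finite (being in $\Fk$), the remaining ``mixed'' configurations ($A$ finite and $C$ co-finite, or vice versa) make $A\triangle C$ co-finite, hence infinite; so the symmetric difference being finite forces $A$ and $C$ to fall on the same side of the finite/co-finite dichotomy.

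With this characterisation, closure of $\lattice{A}$ under the operations of $\Fk\times\Fk$ reduces to the routine inclusions
\begin{gather*}
(A\cup A')\triangle(C\cup C')\subseteq(A\triangle C)\cup(A'\triangle C'),\\
(A\cap A')\triangle(C\cap C')\subseteq(A\triangle C)\cup(A'\triangle C'),
\end{gather*}
which one verifies by a straightforward element chase. The bounds $\tuple{\emptyset,\emptyset}$ and $\tuple{\kappa,\kappa}$ are finite and co-finite respectively, so both lie in $\lattice{A}$, and this already makes $\lattice{A}$ a bounded sublattice of $\Fk\times\Fk$.

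To conclude that $\lattice{A}$ is Boolean, I would observe that $\lattice{A}$ inherits distributivity from $\Fk\times\Fk$. The complement of $\AC$ in $\Fk\times\Fk$ is $\tuple{\kompl A,\kompl C}$, and the identity $\kompl A\triangle\kompl C=A\triangle C$ shows that this complement already lies in $\lattice{A}$ whenever $\AC$ does. Since a bounded distributive lattice in which every element has a complement is Boolean, the proof is complete. I do not anticipate any real obstacle here; the only point requiring a moment of care is the equivalence between $A\sim C$ and finiteness of the symmetric difference, where one has to rule out the two mixed finite/co-finite configurations.
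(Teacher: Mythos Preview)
Your proof is correct. The paper takes a different but equally elementary route: rather than recasting $A\sim C$ via the symmetric difference, it argues directly by case analysis on the finite/co-finite status of the pairs. For the meet, if at least one of $\AC$, $\AC[']$ is finite then so is $\tuple{A\cap A',C\cap C'}$, while if both are co-finite then so is the meet; the join is handled dually. Complements and bounds are treated exactly as you do.

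The symmetric-difference characterisation you exploit is in fact stated in the paper immediately before the lemma, but not used in the proof itself. Your approach has the minor advantage of being uniform (no case split), and the inclusions you invoke are reusable facts about symmetric differences; the paper's case analysis is perhaps more transparent to a reader who has not internalised those inclusions. Neither argument is deeper than the other.
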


\begin{proof}
	Let $\AC$, $\AC[']$ be a pair of elements from $\lattice{A}$. If at least one of them is finite, then $\tuple{A \cap A', C\cap C'}$ is clearly finite as well. If both $\AC$ and $\AC[']$ are co-finite, then so is $\tuple{A\cap A', C\cap C'}$. In either case, $\tuple{A\cap A', C\cap C'} \in \lattice{A}$.
	
	If at least one of the pairs $\AC, \AC[']$ is co-finite, then $\tuple{A \cup A', C \cup C'}$ is co-finite, while if both $\AC$ and $\AC[']$ are finite, then so is 
	$\tuple{A \cup A', C \cup C'}$. In particular, $\tuple{A \cup A', C\cup C'} \in \lattice{A}$ whenever  $\AC, \AC['] \in \lattice{A}$.
	
	We have shown that $\lattice{A}$ is a sublattice of $\Fk \times \Fk$. To complete the proof, observe that $\tuple{\emptyset,\emptyset}$ is finite and $\tuple{\kappa,\kappa}$ is co-finite and that the unique complement in $\Fk \times \Fk$ of each $\AC \in \lattice{A}$, namely $\tuple{\kompl A, \kompl C}$ belongs to $\lattice{A}$.
\end{proof}

\begin{lemma}\label{lemma:Bsublattice} The $g$-image $\lattice{B} = g(\lattice{A})$ of $\lattice{A}$ is a bounded Boolean sublattice of $\lattice{S}$.  
\end{lemma}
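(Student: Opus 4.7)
The plan is to leverage the work already done in Lemmas~\ref{lemma:g} and~\ref{lemma:Asublattice}: the only new content needed is verifying that $g(\lattice{A})$ actually lies inside $\lattice{S}$. Once this inclusion is established, the Boolean/sublattice/bounded properties transfer automatically, because $g$ is a bounded lattice embedding of $\Fk \times \Fk$ into $\lattice{M}_3[\Fk]$, $\lattice{A}$ is a bounded Boolean sublattice of $\Fk \times \Fk$, and by Lemma~\ref{lemma:Usublattice} $\lattice{S}$ is a bounded sublattice of $\lattice{M}_3[\Fk]$, so all joins and meets agree.

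The key computation is the following: for any $\AC \in \Fk \times \Fk$, using the definition of $\meansymbol$,
\begin{equation*}
  \mean{g\AC} = \mean{\tuple{A,A\cap C,C}} = (A \cap (A\cap C)) \cup (A \cap C) \cup ((A \cap C) \cap C) = A \cap C,
\end{equation*}
so
\begin{equation*}
  C \setminus \mean{g\AC} = C \setminus A.
\end{equation*}
Hence $g\AC$ belongs to $\lattice{T}$ exactly when $C \setminus A$ is finite. I would then split on the two cases defining $\lattice{A}$: if $\AC$ is finite, then $C$ itself is finite and so is $C \setminus A$; if $\AC$ is co-finite, then $\kompl A$ is finite and $C \setminus A \subseteq \kompl A$ is again finite. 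Therefore $g(\lattice{A}) \subseteq \lattice{T}$, and since $g(\lattice{A}) \subseteq \lattice{M}_3[\Fk]$ holds by Lemma~\ref{lemma:g}, we obtain $\lattice{B} = g(\lattice{A}) \subseteq \lattice{S}$.

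To conclude, I would invoke Lemmas~\ref{lemma:g} and~\ref{lemma:Asublattice}: since $g$ is a bounded lattice embedding that carries $0_{\Fk \times \Fk}$ and $1_{\Fk \times \Fk}$ to $0_{\lattice{M}_3[\Fk]}$ and $1_{\lattice{M}_3[\Fk]}$ respectively (which coincide with $0_\lattice{S}$ and $1_\lattice{S}$), and $\lattice{A}$ is a bounded Boolean sublattice of $\Fk \times \Fk$, the image $\lattice{B}$ is a bounded Boolean sublattice of $\lattice{M}_3[\Fk]$. Because joins and meets in $\lattice{S}$ inherit from $\lattice{M}_3[\Fk]$ and $\lattice{B} \subseteq \lattice{S}$, this makes $\lattice{B}$ a bounded Boolean sublattice of $\lattice{S}$, as required.

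There is essentially no main obstacle here; the lemma is a bookkeeping step that packages the embedding $g$ and the sublattice $\lattice{A}$ into $\lattice{S}$. The only thing one has to notice is the cancellation that reduces $\mean{g\AC}$ to $A \cap C$, which makes the finiteness condition in the definition of $\lattice{T}$ trivially verifiable on both branches of the definition of $\lattice{A}$.
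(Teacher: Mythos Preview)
Your proof is correct and follows the same route as the paper's: reduce to showing $g(\lattice{A})\subseteq\lattice{S}$ via Lemmas~\ref{lemma:g}, \ref{lemma:Asublattice}, and~\ref{lemma:Usublattice}, then verify that $C\setminus\mean{g\AC}=C\setminus(A\cap C)=C\setminus A$ is finite by splitting on the finite/co-finite cases. Your write-up is slightly more explicit about the computation of $\mean{g\AC}$, but the argument is the same.
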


\begin{proof} Due to Lemma~\ref{lemma:g} and Lemma~\ref{lemma:Asublattice}, $\lattice{B}$ is a bounded Boolean sublattice of $\lattice{M}_3[\Fk]$. Thus in view of Lemma~\ref{lemma:Usublattice}, it suffices to verify that $\lattice{B} \subseteq \lattice{S}$, that is, that $C \setminus (A \cap C)$ is finite for every $\AC \in \lattice{A}$. This is clear when $\AC$ is finite. If $\AC$ is co-finite, then $C \setminus (A \cap C) = C \setminus A \subseteq \kappa \setminus A$ is finite and we are done.  
\end{proof}

Observe that if $\ABC$ is a balanced triple then $B \subseteq A$ if and only if $B = A \cap B = A \cap C$. It follows that 

\begin{equation}\label{equation:popisB}	
	\lattice{B} = \setof{ \ABC \in \lattice{S}}{ A \sim C \text{ and } B \subseteq A}.
\end{equation}

\begin{lemma}\label{L:U-B a komplement}
	Let $\ABC \in \lattice{S} \setminus \lattice{B}$ and let $\ABC[']$ be a complement of $\ABC$ in \lattice{S}. If $B \subseteq A$, then $B' \not \subseteq A'$.
\end{lemma}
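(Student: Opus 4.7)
The plan is to argue by contradiction: assume $B' \subseteq A'$ holds alongside $B \subseteq A$, and deduce $A \sim C$, which by \eqref{equation:popisB} contradicts $\ABC \in \lattice{S} \setminus \lattice{B}$.

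First I would unpack the two ``$B$-in-$A$'' assumptions against balancedness. Since $\ABC$ is balanced and $B \subseteq A$, one has $B = A \cap B = A \cap C = B \cap C$, so $B \subseteq C$ and $\mean{\ABC} = B$. Membership in $\lattice{S}$ then forces $C \setminus A = C \setminus B$ to be finite. The analogous reasoning for $\ABC[']$ yields $B' = A' \cap C'$, $B' \subseteq C'$, $\mean{\ABC[']} = B'$, and $C' \setminus A'$ finite.

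Next I would exploit the complement relations. From $\ABC \wedge \ABC['] = 0_\lattice{S}$ one reads $A \cap A' = B \cap B' = C \cap C' = \emptyset$, whence $A \cap B' \subseteq A \cap A' = \emptyset$ and symmetrically $A' \cap B = \emptyset$. Writing $M := \mean{\tuple{A \cup A', B \cup B', C \cup C'}}$, a direct expansion gives $(A \cup A') \cap (B \cup B') = B \cup B'$; and using $B \subseteq A \cap C$ together with $B' \subseteq A' \cap C'$, each of the three pairwise intersections contributing to $M$ is seen to lie both in $A \cup A'$ and in $C \cup C'$. Hence $M \subseteq (A \cup A') \cap (C \cup C')$. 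The join $\ABC \vee \ABC['] = 1_\lattice{S}$, unfolded through \eqref{equation:join} and \eqref{equation:closure}, demands $(A \cup A') \cup M = (C \cup C') \cup M = \kappa$; combined with $M \subseteq (A \cup A') \cap (C \cup C')$ and the disjointness relations, this forces $A \cup A' = C \cup C' = \kappa$ and therefore $A' = \kompl A$, $C' = \kompl C$. Consequently $A \setminus C = A \cap \kompl C = C' \setminus A'$ is finite, which together with finiteness of $C \setminus A$ yields $A \sim C$, the desired contradiction.

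The only delicate point in the argument is the containment $M \subseteq (A \cup A') \cap (C \cup C')$, which is where the assumption $B' \subseteq A'$ (together with $B \subseteq A$) really bites; once that is in hand the rest is formal bookkeeping with the complement axioms and the finiteness clause defining $\lattice{S}$.
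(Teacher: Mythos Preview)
Your argument is correct, but it follows a different line from the paper's proof. The paper first uses $A \nsim C$ together with $B \subseteq A$ and the finiteness of $C \setminus B$ to pin down that $A$ is cofinite and $B$, $C$ are finite; then, assuming $B' \subseteq A'$, it deduces from $A' \subseteq \kompl{A}$ that $A'$, $B'$, $C'$ are all finite, so that $\mean{\tuple{A\cup A', B\cup B', C\cup C'}} \subseteq B \cup B' \cup C \cup C'$ is finite and hence the second coordinate of $\ABC \vee \ABC[']$ cannot equal $\kappa$. Your approach avoids the preliminary determination of which of $A$, $C$ is finite: from $M \subseteq (A \cup A') \cap (C \cup C')$ and the first and third coordinates of the join you extract $A' = \kompl{A}$ and $C' = \kompl{C}$, and then combine the finiteness of $C \setminus A$ (from $\ABC \in \lattice{S}$) with the finiteness of $A \setminus C = C' \setminus A'$ (from $\ABC['] \in \lattice{S}$) to obtain $A \sim C$ directly. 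The paper's route is a bit quicker once the sizes are located, while your route is more symmetric and makes the forced complementation $A' = \kompl{A}$, $C' = \kompl{C}$ visible as the structural obstruction.
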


\begin{proof}
	Since $\ABC \not \in \lattice{B}$ and $B \subseteq A$, it follows from \eqref{equation:popisB} that $A \nsim C$. Hence exactly one of the two sets $A, C$ is finite. From $B \subseteq A$ and $C \setminus B$ being finite we conclude that $C$ and $\kappa \setminus A$ are finite. It follows that the set $B=B\cap C$ is finite as well.
	
	Suppose now that $B'\subseteq A'$. Since $\ABC \wedge \ABC['] = 0_{\lattice{S}}$, we have that $A \cap A' = \emptyset$, whence the set $A'$ $\subseteq \kappa \setminus A$ is finite. A fortiori, the set $B'$ is also finite due to the assumption that $B'\subseteq A'$. As $ C'\setminus B' = C'\setminus (B'\cap A') = C'\setminus \mean{\ABC[']}$ is also finite, we conclude that so is $C'$. But then
	\begin{equation*}
	\mean{\tuple{A\cup A', B\cup B', C\cup C'}} \subseteq B \cup B'\cup C\cup C'
	\end{equation*}
	is a finite set, which contradicts the assumption that $\ABC \vee \ABC['] = \tuple{\kappa, \kappa, \kappa} = 1_\lattice{S}$. 
\end{proof}

\begin{corollary}\label{C:U-B a komplement} Every complemented bounded sublattice $\lattice{C}$ of $\lattice{S}$ such that $\lattice{B} \subsetneq \lattice{C}$ contains an element $\ABC$ with $B \not \subseteq A$.  
\end{corollary}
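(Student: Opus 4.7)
The plan is to extract a witness from $\lattice{C} \setminus \lattice{B}$ (which is nonempty by the strict inclusion) and argue by cases on whether the second coordinate is contained in the first. Fix $\ABC \in \lattice{C} \setminus \lattice{B}$. In the first case, if $B \not\subseteq A$, the element $\ABC$ itself is the desired witness and there is nothing more to prove.

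The second case, $B \subseteq A$, is where Lemma~\ref{L:U-B a komplement} does the work. Since $\lattice{C}$ is complemented, $\ABC$ has a complement $\ABC[']$ inside $\lattice{C}$. The key observation is that this complement is automatically a complement in the ambient lattice $\lattice{S}$: indeed, $\lattice{C}$ being a \emph{bounded} sublattice of $\lattice{S}$ means the meet and join operations in $\lattice{C}$ agree with those in $\lattice{S}$, and moreover $0_{\lattice{C}} = 0_{\lattice{S}}$ and $1_{\lattice{C}} = 1_{\lattice{S}}$, so the equations $\ABC \wedge \ABC['] = 0_{\lattice{S}}$ and $\ABC \vee \ABC['] = 1_{\lattice{S}}$ hold. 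Applying Lemma~\ref{L:U-B a komplement} to the pair $\ABC \in \lattice{S} \setminus \lattice{B}$ (with $B \subseteq A$) and its complement $\ABC[']$ in $\lattice{S}$ yields $B' \not\subseteq A'$. Since $\ABC['] \in \lattice{C}$, it is the witness demanded by the statement.

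There is no real obstacle here beyond the bookkeeping observation that ``complement in $\lattice{C}$'' and ``complement in $\lattice{S}$'' coincide for elements of $\lattice{C}$, which reduces the corollary to a direct invocation of the preceding lemma.
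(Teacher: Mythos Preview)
Your proof is correct and follows essentially the same route as the paper's: pick an element of $\lattice{C}\setminus\lattice{B}$, take its complement in $\lattice{C}$, and apply Lemma~\ref{L:U-B a komplement} to obtain the dichotomy $B\not\subseteq A$ or $B'\not\subseteq A'$. Your explicit remark that a complement in the bounded sublattice $\lattice{C}$ is automatically a complement in $\lattice{S}$ is a detail the paper leaves implicit, but otherwise the arguments coincide.
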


\begin{proof} Let $\ABC \in \lattice{C} \setminus \lattice{B}$ and let $\ABC[']$ be its complement in \lattice{C}. Applying Lemma~\ref{L:U-B a komplement}, we get that either $B \not\subseteq A$ or $B' \not\subseteq A'$.
\end{proof}

\begin{proposition}\label{proposition:Bmaximal}
	The lattice \lattice{B}\ is a maximal Boolean sublattice of \lattice{S}.
\end{proposition}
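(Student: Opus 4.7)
The plan is to assume for contradiction that $\lattice{B}$ is properly contained in some Boolean sublattice $\lattice{C}$ of $\lattice{S}$ and then exhibit elements of $\lattice{C}$ that witness a failure of distributivity. Since every Boolean lattice is complemented, Corollary~\ref{C:U-B a komplement} supplies some $u = \ABC \in \lattice{C}$ with $B \not\subseteq A$. Fix any $x \in B \setminus A$; the balancedness identity $A \cap B = B \cap C$ forces $x \notin C$ as well, so $x$ lies in $B$ while being absent from both $A$ and $C$.

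To detect this discrepancy, I would bring in two singleton-sized elements of $\lattice{B}$, namely
\[
p := g\tuple{\{x\}, \emptyset} = \tuple{\{x\}, \emptyset, \emptyset} \qquad \text{and} \qquad q := g\tuple{\emptyset, \{x\}} = \tuple{\emptyset, \emptyset, \{x\}},
\]
both of which belong to $\lattice{B}$ because $\tuple{\{x\}, \emptyset}$ and $\tuple{\emptyset, \{x\}}$ are finite and therefore lie in $\lattice{A}$. Coordinate-wise meet immediately yields $u \wedge p = u \wedge q = 0_{\lattice{S}}$ since $x \notin A$ and $x \notin C$. On the other hand, a short computation using either \eqref{equation:JoinInB} or directly the closure formula \eqref{equation:closure} gives $p \vee q = \tuple{\{x\}, \{x\}, \{x\}}$, whence $u \wedge (p \vee q) = \tuple{\emptyset, \{x\}, \emptyset}$, which is nonzero because $x \in B$.

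Distributivity of $\lattice{C}$ would force $u \wedge (p \vee q) = (u \wedge p) \vee (u \wedge q) = 0_{\lattice{S}}$, contradicting the computation above. Consequently no proper Boolean extension of $\lattice{B}$ inside $\lattice{S}$ can exist. In my view, the only non-mechanical step is locating the right witnesses: the hypothesis $B \not\subseteq A$ is exactly what furnishes a point $x \in B \setminus (A \cup C)$, and placing the singleton $\{x\}$ into the first and third slots of the $g$-embedding is tailored to annihilate $u$ under meet while still joining to $\tuple{\{x\}, \{x\}, \{x\}}$, thereby exposing the middle-coordinate disparity in the purported distributive law.
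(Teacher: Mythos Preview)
Your argument is correct and matches the paper's proof essentially line for line: both invoke Corollary~\ref{C:U-B a komplement} to obtain $\ABC\in\lattice{C}$ with $B\not\subseteq A$, use balancedness to see that a chosen subset of $B\setminus A$ also misses $C$, and then exhibit the failure of distributivity via the same two $g$-images. The only cosmetic difference is that the paper works with an arbitrary finite nonempty $F\subseteq B\setminus A$ where you take the singleton $\{x\}$.
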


\begin{proof}
	Let \lattice{C}\ be a complemented bounded sublattice of \lattice{S}\ satisfying $\lattice{B} \subsetneq \lattice{C}$. There is $\ABC \in \lattice{C}$ with $B \not \subseteq A$ by Corollary~\ref{C:U-B a komplement}. We can pick a finite nonempty $F \subseteq (B \setminus A)$. Since the triple $\ABC$ is balanced, 
	\begin{equation}\label{equation:FC empty}
		\emptyset = F \cap A = F \cap B \cap A = F \cap B \cap C = F \cap C.
	\end{equation}
	Now observe that both $g\tuple{F,\emptyset}$ and $g\tuple{\emptyset,F}$ are in \lattice{B}. Applying \eqref{equation:JoinInB} and \eqref{equation:FC empty}, we get that
\begin{equation}\label{equation:FirstNonDistributive}
	\ABC \wedge \big( g\tuple{F,\emptyset} \vee g\tuple{\emptyset,F} \big) = \ABC \wedge g\tuple{F,F} = \tuple{\emptyset,F,\emptyset}, 
\end{equation}       
while
\begin{equation}\label{equation:SecondNonDistributive}
	\big( \ABC \wedge g\tuple{F,\emptyset} \big) \vee \big( \ABC \wedge g\tuple{\emptyset,F} \big)
	= \tuple{\emptyset,\emptyset,\emptyset}. 
\end{equation} 
It follows from \eqref{equation:FirstNonDistributive} and \eqref{equation:SecondNonDistributive} that 
	the lattice \lattice{C}\ is not distributive, {\em a fortiori} it is not Boolean.
\end{proof}

\begin{proposition}\label{Prop:B not Ban range}
	The sublattice \lattice{B}\ of \lattice{S}\ is not the range of any Banaschewski function on \lattice{S}.
\end{proposition}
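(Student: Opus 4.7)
The plan is to establish the proposition by its contrapositive: I would exhibit a single element $x \in \lattice{S}$ that admits no complement whatsoever in $\lattice{B}$. If $f\colon \lattice{S} \to \lattice{S}$ were a Banaschewski function with range exactly $\lattice{B}$, then for every $x \in \lattice{S}$ the value $f(x)$ would both lie in $\lattice{B}$ and satisfy $f(x) \oplus x = 1_{\lattice{S}}$; a single $x$ with no complement in $\lattice{B}$ therefore rules out all such $f$ at once, and the antitone condition is not even needed.

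The witness I would choose is $x := \tuple{\kappa, \emptyset, \emptyset}$. It is plainly balanced (all pairwise meets vanish) and $C \setminus \mean{x} = \emptyset$ is finite, so $x \in \lattice{S}$. Since $A = \kappa$ is cofinite while $C = \emptyset$ is finite, we have $A \not\sim C$, and the description \eqref{equation:popisB} yields $x \notin \lattice{B}$. Moreover $B = \emptyset \subseteq \kappa = A$, so the hypotheses of Lemma~\ref{L:U-B a komplement} are met, and every complement $\ABC[']$ of $x$ in $\lattice{S}$ satisfies $B' \not\subseteq A'$. But every element of $\lattice{B}$ has the form $g\AC['] = \tuple{A', A' \cap C', C'}$, in which the second coordinate is automatically contained in the first. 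Combining these two observations, no complement of $x$ lies in $\lattice{B}$, which contradicts $f(x) \in \lattice{B}$ being such a complement.

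The substance of the argument has really been absorbed into Lemma~\ref{L:U-B a komplement}, which already encodes the incompatibility between complementation in $\lattice{S}$ and the structural constraint $B \subseteq A$ defining $\lattice{B}$. Consequently there is no serious obstacle at this final step; the only task is to spot a concrete triple in $\lattice{S} \setminus \lattice{B}$ satisfying $B \subseteq A$, for which $\tuple{\kappa, \emptyset, \emptyset}$ is essentially the minimal choice.
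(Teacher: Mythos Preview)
Your proof is correct and follows the same overall strategy as the paper: both observe that the range of any Banaschewski function must contain a complement of every element of $\lattice{S}$, and both exhibit the same witness $x = \tuple{\kappa, \emptyset, \emptyset}$ as an element with no complement in $\lattice{B}$. The only difference is in how this last fact is verified. You invoke Lemma~\ref{L:U-B a komplement} as a black box, noting that every element of $\lattice{B}$ has its second coordinate contained in its first, so the conclusion $B' \not\subseteq A'$ of that lemma immediately excludes $\lattice{B}$; the paper instead argues directly that a putative complement $g\AC \in \lattice{B}$ of $x$ would be forced to have $A = B = \emptyset$ and $C = \kappa$, which violates the finiteness condition defining $\lattice{S}$. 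Your route is a bit more modular and reuses machinery already in place, while the paper's hands-on computation keeps the proposition self-contained and makes the contradiction with membership in $\lattice{S}$ explicit.
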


\begin{proof}
	The range of a Banaschewski function on \lattice{S}\ must contain a complement of each element of \lattice{S}. We show that no complement of $\tuple{\kappa, \emptyset, \emptyset}$ in \lattice{S}\ belongs to \lattice{B}. 
	
	Suppose the contrary, that is, that there is $\ABC = g\AC \in \lattice{B}$ satisfying $\tuple{\kappa, \emptyset, \emptyset} \oplus \ABC = 1_{\lattice{S}}$. Then $A = A \cap \kappa =\emptyset$, and by \eqref{equation:popisB} also $B = \emptyset$. Then from $B = \emptyset$ and  $\tuple{\kappa, \emptyset, \emptyset} \vee \ABC = 1_\lattice{S}$, one infers that $C = \kappa$. It follows that $\ABC \notin \lattice{S}$; indeed, $C \setminus \mean{\ABC} = C \setminus \emptyset = \kappa$ is not finite. Thus $\ABC \not \in \lattice{B}$, which is a contradiction.
\end{proof}

\begin{remark}
	Note that for the particular case of $\kappa=\aleph_0$, the assertion of Proposition~\ref{Prop:B not Ban range} follows from Proposition~\ref{Prop:B E noniso} together with \cite[Corollary 4.8]{Weh10}, which states that the ranges of two Boolean Banaschewski functions on a \emph{countable} complemented modular lattice are isomorphic.
\end{remark}

\begin{proposition}\label{Prop:B E noniso}
	The lattices \lattice{B}\ and \lattice{E}\ are not isomorphic.
\end{proposition}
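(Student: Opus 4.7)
The strategy is to identify a Boolean-lattice isomorphism invariant that distinguishes $\lattice{E}$ from $\lattice{B}$. By Lemma~\ref{lemma:E} one has $\lattice{E}\simeq\Fk\times\Fk$, and by Lemmas~\ref{lemma:g} and~\ref{lemma:Bsublattice} one has $\lattice{B}\simeq\lattice{A}$, so it suffices to establish that the Boolean lattices $\Fk\times\Fk$ and $\lattice{A}$ are not isomorphic. I will use the following property $(\star)$: there exists an element $x$ such that both $x$ and its unique complement have infinitely many atoms below them. Since Boolean-lattice isomorphisms preserve atoms, complements, and cardinalities of principal ideals, $(\star)$ is an isomorphism invariant.

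First I would verify $(\star)$ in $\Fk\times\Fk$. One easily checks that its atoms are exactly the pairs $\tuple{\{\alpha\},\emptyset}$ and $\tuple{\emptyset,\{\alpha\}}$ for $\alpha\in\kappa$. Taking $x=\tuple{\kappa,\emptyset}$, every atom $\tuple{\{\alpha\},\emptyset}$ lies below $x$, and every atom $\tuple{\emptyset,\{\alpha\}}$ lies below the complement $\tuple{\emptyset,\kappa}$, which gives infinitely many atoms below each. Next I would show that $(\star)$ fails in $\lattice{A}$. The same singleton pairs are atoms of $\lattice{A}$ (each is finite, hence lies in $\lattice{A}$, and minimality is inherited from $\Fk\times\Fk$). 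For any $\tuple{A,C}\in\lattice{A}$, the atoms below it are precisely those singleton pairs matching its nonempty coordinate, so there are $|A|+|C|$ of them in total.

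The key observation is that membership in $\lattice{A}$ forces $A\sim C$, so every element of $\lattice{A}$ is either a pair of finite sets or a pair of co-finite sets. Consequently, for any $\tuple{A,C}\in\lattice{A}$ at least one of $\tuple{A,C}$ and its complement $\tuple{\kompl A,\kompl C}$ is a pair of finite sets and hence has only finitely many atoms below it. This precludes $(\star)$ in $\lattice{A}$, and the proposition follows. No step in the argument looks obstructive; the crux is simply recognizing that the linking condition $A\sim C$ forbids the ``both-coordinates-cofinite'' witnesses that are freely available in $\Fk\times\Fk$.
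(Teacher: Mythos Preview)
Your argument is correct and is essentially the same as the paper's. The paper phrases the invariant dually---in $\lattice{B}$ every element is a finite join of atoms or a finite meet of co-atoms, while $\lattice{E}\simeq\Fk\times\Fk$ contains elements (such as $\tuple{\kappa,\emptyset}$) that are neither---but in an atomic Boolean lattice this is precisely the negation of your property~$(\star)$, and both proofs pivot on the same witness.
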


\begin{proof}
	In \lattice{B}, every \emph{finite} element $g\AC$ is a join of a finite set of atoms, namely
	\begin{equation*}
		g\AC = \left(
		\bigvee_{\alpha \in A} g\tuple{\{\alpha\}, \emptyset} 
		\right) \vee \left(
		\bigvee_{\gamma \in C} g\tuple{\emptyset, \{\gamma\}} 
		\right),
	\end{equation*}	
and, dually, every co-finite element is a meet of a finite set of co-atoms. On the other hand, there are elements in $\Fk \times \Fk$ that are neither finite joins of atoms nor finite meets of co-atoms. Recall that in Lemma~\ref{lemma:E}, we have observed that the lattice \lattice{E}\ is isomorphic to $\Fk\times \Fk$. Therefore the lattices \lattice{B}\ and \lattice{E}\ are not isomorphic.
\end{proof}

\section{Representing \lattice{S} in a subspace-lattice} 

Although the construction in the three previous sections was performed for an \emph{infinite} cardinal $\kappa$, the results of the present section on embedding the lattice $\lattice M_3[\Pk]$ into $\Sub (\lattice V)$ (namely Theorem~\ref{theorem:U->Sub}) work just as well for $\kappa$ finite.
In particular, Proposition~\ref{prop:M3L Arguesian} (an enhancement of \cite[Lemma 2.9]{GW99b}) holds for lattices of any cardinality.

Let $\mathbb{F}$ be an arbitrary field and let $\vectorspace{V}$ denote the vector space over the field $\mathbb{F}$ presented by generators $x_\alpha, y_\alpha, z_\alpha$, $\alpha \in \kappa$, and relations $x_\alpha + y_\alpha + z_\alpha = 0$. For a subset $X$ of the vector space $\vectorspace{V}$ we denote by $\Span(X)$  the subspace of $\vectorspace{V}$ generated by $X$. Given subspaces of $\vectorspace{V}$, say $\vectorspace{X}$ and $\vectorspace{Y}$, we will use the notation $\vectorspace{X} + \vectorspace{Y} = \Span(\vectorspace{X} \cup \vectorspace{Y})$. Let $\Sub(\vectorspace{V})$ denote the lattice of all subspaces of the vector space $\vectorspace{V}$. 

For all $A,B,C \subseteq \kappa$ we put $\vectorspace{X}_A = \Span(\setof{x_\alpha}{\alpha \in A})$, $\vectorspace{Y}_B = \Span(\setof{y_\beta}{\beta \in B})$, and $\vectorspace{Z}_C = \Span(\{z_\gamma \mid \gamma \in C\})$. 

We define a map $F \colon \Pk^3 \to \Sub(\vectorspace{V})$ by the correspondence 
\begin{equation}\label{equation:definitionofF}
\ABC \mapsto \vectorspace{X}_A + \vectorspace{Y}_B + \vectorspace{Z}_C. 
\end{equation}
Each of the sets $\setof{x_\alpha}{\alpha \in \kappa}$, $\setof{y_\beta}{\beta \in \kappa}$, and $\setof{z_\gamma}{\gamma \in \kappa}$ is clearly linearly independent.	It follows that $\vectorspace{X}_{A \cup A'} = \vectorspace{X}_A + \vectorspace{X}_{A'}$ for all $A,A'\subseteq \kappa$ and, similarly,  $\vectorspace{Y}_{B \cup B'} = \vectorspace{Y}_B + \vectorspace{Y}_{B'}$ and $\vectorspace{Z}_{C \cup C'} = \vectorspace{Z}_C + \vectorspace{Z}_{C'}$ for all $B,B',C,C'\subseteq \kappa$. A straightforward computation gives the following lemma:   

\begin{lemma}\label{lemma:Fjoinhomomorphism} The map  $F \colon \Pk^3 \to \Sub(\vectorspace{V})$ is a bounded join-homomorphism. 
\end{lemma}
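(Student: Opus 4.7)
The plan is to verify the two required properties separately: preservation of binary joins and preservation of the bottom and top elements. Both reduce quickly to identities about sums of subspaces that are noted in the paragraph immediately preceding the lemma, so the proof should be short.

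First I would check join preservation. Since the join in $\Pk^3$ is coordinate-wise union, the task is to establish
\[ F(\tuple{A\cup A', B\cup B', C\cup C'}) = F\ABC + F\ABC['] \]
for arbitrary $\ABC, \ABC['] \in \Pk^3$. Expanding both sides via the definition \eqref{equation:definitionofF} and invoking the three identities $\vectorspace{X}_{A\cup A'} = \vectorspace{X}_A + \vectorspace{X}_{A'}$, $\vectorspace{Y}_{B\cup B'} = \vectorspace{Y}_B + \vectorspace{Y}_{B'}$, and $\vectorspace{Z}_{C\cup C'} = \vectorspace{Z}_C + \vectorspace{Z}_{C'}$ recorded just above, the required equality reduces to a routine rearrangement using commutativity and associativity of subspace sum.

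Next I would verify that $F$ preserves the bounds. The bottom element $\tuple{\emptyset,\emptyset,\emptyset}$ is sent to $\vectorspace{X}_\emptyset + \vectorspace{Y}_\emptyset + \vectorspace{Z}_\emptyset$, which is the zero subspace since each summand is spanned by the empty set. For the top, observe that $\vectorspace{V}$ is by construction spanned by the family $\{x_\alpha, y_\alpha, z_\alpha : \alpha \in \kappa\}$, and each of these generators lies in $\vectorspace{X}_\kappa \cup \vectorspace{Y}_\kappa \cup \vectorspace{Z}_\kappa$, so $F\tuple{\kappa,\kappa,\kappa} = \vectorspace{X}_\kappa + \vectorspace{Y}_\kappa + \vectorspace{Z}_\kappa = \vectorspace{V}$.

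I do not anticipate a genuine obstacle at this stage; the lemma is essentially a bookkeeping step translating the span identities into the language of $\Pk^3$, and nowhere does the proof invoke the defining relations $x_\alpha + y_\alpha + z_\alpha = 0$. Those relations should become essential in the subsequent Theorem~\ref{theorem:U->Sub}, where one will need to show that the restriction of $F$ to balanced triples respects meets as well (which it certainly does \emph{not} on all of $\Pk^3$, since join in $\lattice{M}_3[\Fk]$ involves the closure operator) and to identify the image with the intended sublattice of $\Sub(\vectorspace{V})$; that is where the real work will lie.
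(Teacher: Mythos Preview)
Your proof is correct and follows essentially the same route as the paper: verify the bounds $F\tuple{\emptyset,\emptyset,\emptyset}=\vectorspace{0}$ and $F\tuple{\kappa,\kappa,\kappa}=\vectorspace{V}$, then expand $F\ABC + F\ABC[']$ and apply the span identities $\vectorspace{X}_{A\cup A'}=\vectorspace{X}_A+\vectorspace{X}_{A'}$ (and the analogous ones for $\vectorspace{Y},\vectorspace{Z}$) recorded just before the lemma. Your closing parenthetical is a bit tangled (you mention meets but justify with the closure in the \emph{join}), but that is only commentary and does not affect the argument for the present lemma.
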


\begin{proof} Clearly $F\tuple{\emptyset,\emptyset,\emptyset} = \vectorspace{0}$ and $F\tuple{\kappa,\kappa,\kappa} = \vectorspace{V}$. Following the definitions, we compute $F(\ABC) + F(\ABC[']) = \vectorspace{X}_{A} + \vectorspace{Y}_{B} + \vectorspace{Z}_{C} + \vectorspace{X}_{A'} + \vectorspace{Y}_{B'} + \vectorspace{Z}_{C'}  
= \vectorspace{X}_{A \cup A'} + \vectorspace{Y}_{B \cup B'} + \vectorspace{Z}_{C \cup C'} = F(\tuple{A \cup A', B \cup B', C \cup C'}).$
\end{proof}

Let $G \colon \Sub(\vectorspace{V}) \to \Pk^3$ be a map defined by 
\begin{equation*}
\vectorspace{W} \mapsto \tuple{\setof{\alpha}{x_\alpha \in \vectorspace{W}},\setof{\beta}{y_\beta \in \vectorspace{W}}, \setof{\gamma}{z_\gamma \in \vectorspace{W}}},
\end{equation*}
for all $\vectorspace{W} \in \Sub(\vectorspace{V})$.

It is straightforward that $G$ is a bounded meet-homomorphism and that it is the right adjoint of $F$ (i.e., replacing the lattice $\Sub(\vectorspace{V})$ with its dual, the maps $F$ and $G$ form a Galois correspondence \cite{Ore44}). Indeed, one readily sees that 
\begin{equation*}
F\ABC \subseteq \vectorspace{W} \text{ iff } \ABC \le G(\vectorspace{W}).
\end{equation*}  

The maps $F$ and $G$ induce a closure operator $GF$ on $\Pk^3$. 

\begin{lemma}\label{lemma:GF=closure} The composition $GF \colon \Pk^3 \to \Pk^3$ is precisely the closure operator $\closure{\tuple{-}}$ on $\Pk^3$ defined by \eqref{equation:closure}.
\end{lemma}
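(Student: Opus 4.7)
The plan is to establish that $GF\ABC = \closure{\ABC}$ for each triple $\ABC \in \Pk^3$ by verifying, coordinate by coordinate, that an index $\alpha$ belongs to the first coordinate of $GF\ABC$ (i.e.\ that $x_\alpha \in F\ABC$) if and only if $\alpha \in A \cup (B \cap C)$, with the statements for the $y$- and $z$-coordinates following by the obvious symmetry between the three generating families. Recall from the discussion in the excerpt that $A \cup (B \cap C)$, $B \cup (A \cap C)$, $C \cup (A \cap B)$ are precisely the three coordinates of $\closure{\ABC}$, so the coordinate-wise equivalence is exactly what is needed.

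For the easy inclusion $\closure{\ABC} \le GF\ABC$, I would argue directly from the defining relations of $\vectorspace{V}$: if $\alpha \in A$, then $x_\alpha \in \vectorspace{X}_A \subseteq F\ABC$; while if $\alpha \in B \cap C$, then $y_\alpha \in \vectorspace{Y}_B$ and $z_\alpha \in \vectorspace{Z}_C$, and the relation $x_\alpha = -y_\alpha - z_\alpha$ forces $x_\alpha \in F\ABC$. The parallel arguments for the other two coordinates then give one inclusion.

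The reverse inclusion is the substantive step. Here I would invoke the fibered structure of the presentation: since the only relations couple generators sharing a common index $\alpha$, the vector space $\vectorspace{V}$ decomposes as an internal direct sum $\vectorspace{V} = \bigoplus_{\alpha \in \kappa} \vectorspace{V}_\alpha$, where $\vectorspace{V}_\alpha := \Span\{x_\alpha, y_\alpha, z_\alpha\}$ is two-dimensional with any two of $x_\alpha, y_\alpha, z_\alpha$ forming a basis. This decomposition is respected by the subspace $F\ABC$, which splits as $\bigoplus_{\alpha \in \kappa}\vectorspace{W}_\alpha$, where $\vectorspace{W}_\alpha$ is the span inside $\vectorspace{V}_\alpha$ of those generators among $x_\alpha, y_\alpha, z_\alpha$ whose index $\alpha$ lies in the associated set $A$, $B$, or $C$. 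A short case analysis, depending on how many of $A, B, C$ contain $\alpha$, shows that either $\vectorspace{W}_\alpha$ is zero or a one-dimensional span of the unique surviving generator (in which case $x_\alpha \in \vectorspace{W}_\alpha$ demands $\alpha \in A$), or else $\alpha$ lies in at least two of the three sets and $\vectorspace{W}_\alpha = \vectorspace{V}_\alpha$ (in which case, under the hypothesis $\alpha \notin A$, one has $\alpha \in B \cap C$). In every configuration, $x_\alpha \in F\ABC$ entails $\alpha \in A \cup (B \cap C)$, completing the proof.

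The main obstacle I foresee is giving a clean justification of the direct-sum decomposition $\vectorspace{V} = \bigoplus_{\alpha \in \kappa}\vectorspace{V}_\alpha$; I would dispatch it in one or two sentences by noting that the presentation is itself a coproduct, over $\alpha \in \kappa$, of the two-dimensional spaces $\mathbb{F}x_\alpha \oplus \mathbb{F}y_\alpha \oplus \mathbb{F}z_\alpha$ modulo the single relation $x_\alpha + y_\alpha + z_\alpha = 0$, so no relation mixes distinct fibres. Everything else reduces to the coordinate-wise formulas $a \vee \mean{\ABC} = a \vee (b \wedge c)$ (and its cyclic analogues) already recorded in the excerpt.
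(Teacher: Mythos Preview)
Your proposal is correct, and the easy inclusion is handled exactly as in the paper. For the reverse inclusion, however, you take a different route. The paper proceeds by an explicit coefficient computation: given $\xi \notin A$ with $x_\xi \in F\ABC$, it writes $x_\xi$ as a linear combination of the generators, eliminates the $z_\gamma$'s via $z_\gamma = -x_\gamma - y_\gamma$, and then compares coefficients in the global basis $\setof{x_\alpha, y_\alpha}{\alpha \in \kappa}$ to force $b_\xi = c_\xi = -1$, hence $\xi \in B \cap C$. Your argument is more structural: you first isolate the $\alpha$-fibre $\vectorspace{V}_\alpha$ via the coproduct decomposition of the presentation, observe that $F\ABC$ respects it, and then dispatch the question by a dimension count inside a single two-dimensional summand. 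Your version is cleaner and makes the underlying reason (that no relation mixes distinct indices) explicit, at the cost of having to justify the direct-sum splitting; the paper's version stays closer to bare linear algebra and avoids that setup, but requires a slightly longer computation. Either is perfectly adequate here.
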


\begin{proof} We shall prove that $GF\ABC = \closure{\ABC}$ for every $\ABC \in \Pk^3$. By symmetry, it suffices to prove that 
\begin{equation*}
\setof{\alpha \in \kappa}{x_\alpha \in F\ABC} = A \cup (B \cap C).
\end{equation*} 
Let $\alpha \in A \cup (B \cap C)$. If $\alpha \in A$, then $x_\alpha \in F\ABC$ by the definition \eqref{equation:definitionofF}, while if $\alpha \in B \cap C$, then $x_\alpha = - y_\alpha - z_\alpha \in F\ABC$ by \eqref{equation:definitionofF} and the defining relations of $\vectorspace{V}$. It follows that $A \cup (B \cap C) \subseteq \setof{\alpha \in \kappa}{x_\alpha \in F\ABC}$.   
 \par In order to prove the opposite inclusion, take any $\xi \in \kappa \setminus A$ satisfying $x_\xi \in F\ABC$; if there is one, there is nothing to prove. We need to show that then $\xi \in B\cap C$. Certainly
\begin{equation}\label{equation:XYZcoordinates}
x_\xi = \sum_{\alpha \in A} a_\alpha x_\alpha + \sum_{\beta \in B} b_\beta y_\beta + \sum_{\gamma \in C} c_\gamma z_\gamma
\end{equation}  
for suitable $a_\alpha$, $b_\beta$, and $c_\gamma \in \mathbb{F}$ such that all but finitely many of them are zero. We set $a_\alpha = 0$ for $\alpha \notin A$, $b_\beta = 0$ for $\beta \notin B$, and $c_\gamma = 0$ for $\gamma \notin C$. Since $z_\gamma + x_\gamma + y_\gamma = 0$ for every $\gamma \in \kappa$, it follows from \eqref{equation:XYZcoordinates} that
\begin{equation}\label{equation:XYcoordinates}
x_\xi = \left(\sum_{\alpha \in A} a_\alpha x_\alpha - \sum_{\gamma \in C} c_\gamma x_\gamma \right) + \left( \sum_{\beta \in B} b_\beta y_\beta - \sum_{\gamma \in C} c_\gamma y_\gamma \right).
\end{equation}  
It easily follows from the defining relations of $\vectorspace{V}$ that $\setof{x_\alpha,y_\alpha}{\alpha \in \kappa}$ forms a basis of $\vectorspace{V}$. Thus, applying 
\eqref{equation:XYcoordinates} we get that
\begin{equation}\label{equation:abcformulas}
a_\xi - c_\xi = 1 \text{ and } b_\xi - c_\xi = 0.
\end{equation}
Since by our assumption $\xi \notin A$, we get from \eqref{equation:XYZcoordinates} that $a_\xi = 0$. Substituting to \eqref{equation:abcformulas} we get that $b_\xi = c_\xi = - 1$, hence $\xi \in B \cap C$. This concludes the proof that $A\cup(B\cap C)\supseteq \set{\alpha \in \kappa | x_\alpha \in F\ABC}$.
\end{proof}

The next lemma shows that $F \restriction \lattice{M}_3[\Pk]$ preserves meets. Note that with Lemma~\ref{lemma:Fjoinhomomorphism}, this means that $F \restriction \lattice{M}_3[\Pk]$ is a lattice embedding of $\lattice{M}_3[\Pk]$ into the lattice $\Sub(\vectorspace{V})$.  

\begin{lemma}\label{lemma:Fpreservesmeets} Let $\ABC, \ABC['] \in \lattice{M}_3[\Pk]$ be balanced triples. Then 
\begin{equation*}
F \ABC \cap F \ABC['] = F \tuple{A \cap A', B \cap B', C \cap C'}.
\end{equation*}
\end{lemma}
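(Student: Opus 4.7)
The plan is to exploit a block decomposition of $\vectorspace{V}$ indexed by $\kappa$. Since the defining relations $x_\alpha + y_\alpha + z_\alpha = 0$ involve only generators with matching index, $\vectorspace{V}$ is the external direct sum $\bigoplus_{\xi \in \kappa} \vectorspace{V}_\xi$, where $\vectorspace{V}_\xi$ is the two-dimensional subspace spanned by $x_\xi$ and $y_\xi$. Note that $z_\xi = -x_\xi - y_\xi \in \vectorspace{V}_\xi$ and that the three vectors $x_\xi, y_\xi, z_\xi$ are pairwise linearly independent inside $\vectorspace{V}_\xi$.

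First I would verify that $F\ABC$ respects this decomposition, writing $F\ABC = \bigoplus_\xi W_\xi$ for a block $W_\xi \subseteq \vectorspace{V}_\xi$ determined by which of $A, B, C$ contain $\xi$. For a \emph{balanced} triple, setting $N := A \cap B = A \cap C = B \cap C$, the five possible blocks are $0$, $\mathbb{F} x_\xi$, $\mathbb{F} y_\xi$, $\mathbb{F} z_\xi$, and $\vectorspace{V}_\xi$, selected respectively by whether $\xi$ lies outside $A \cup B \cup C$, in $A \setminus N$, in $B \setminus N$, in $C \setminus N$, or in $N$.

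The inclusion ``$\supseteq$'' is immediate from the monotonicity of $F$ (an easy consequence of Lemma~\ref{lemma:Fjoinhomomorphism}). For ``$\subseteq$'', a short computation shows that the triple $\tuple{A \cap A', B \cap B', C \cap C'}$ is again balanced, with associated set $N \cap N'$. Thanks to the direct-sum decomposition, the asserted equality then reduces to the local identity $W_\xi \cap W'_\xi = W''_\xi$ for every $\xi \in \kappa$, where $W'_\xi$ and $W''_\xi$ are the analogous blocks of $F\ABC[']$ and $F\tuple{A \cap A', B \cap B', C \cap C'}$.

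The final step is a case analysis on the pair of ``types'' carried by $\xi$ with respect to the two balanced triples. I expect the only real friction to be bookkeeping; this can be kept short by remarking that intersections involving $0$ or $\vectorspace{V}_\xi$ are trivial, that identical one-dimensional blocks intersect in themselves, and that any two \emph{distinct} one-dimensional blocks among $\mathbb{F} x_\xi, \mathbb{F} y_\xi, \mathbb{F} z_\xi$ meet in $0$ by pairwise linear independence. In every combination one then checks directly that the resulting intersection matches the block $W''_\xi$ prescribed by the memberships of $\xi$ in $A \cap A'$, $B \cap B'$, and $C \cap C'$.
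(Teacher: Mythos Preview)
Your proof is correct and takes a genuinely different route from the paper's. The paper argues elementwise: given a nonzero $v$ in the intersection, it picks an expression of $v$ in $F\ABC$ with minimal support, assumes some coefficient $a_\alpha$ is nonzero with $\alpha \notin A'$, and reaches a contradiction by comparing coordinates in the basis $\setof{x_\alpha, y_\alpha}{\alpha \in \kappa}$ and using the balancedness of $\ABC[']$ through the implication $\alpha \notin A' \Rightarrow \alpha \notin B' \cap C'$. Your approach instead exploits the ambient decomposition $\vectorspace{V} = \bigoplus_{\xi} \vectorspace{V}_\xi$ and reduces the statement to a two-dimensional problem at each index, where the balancedness hypothesis is used to restrict the possible blocks $W_\xi$ to the five listed types. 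Your argument is more structural and makes the role of balancedness very transparent (it is precisely what forces each block to be one of $0$, the three distinguished lines, or the whole plane); the paper's minimal-support trick avoids setting up the block notation but trades this for a somewhat delicate case analysis on the coordinates of a single vector. Both approaches require essentially the same small finite case check at the end; yours just localizes it earlier.
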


\begin{proof} Since, by Lemma~\ref{lemma:Fjoinhomomorphism}, $F$ is a join-homomorphism, it is monotone, whence $F\tuple{A \cap A', B \cap B', C \cap C'} \subseteq F\ABC \cap F\ABC[']$. Thus it remains to prove the opposite inclusion. 
\par
Let $v \in F\ABC \cap F\ABC[']$ be a non-zero vector. Then $v$ can be expressed as
\begin{equation}\label{equation:expression}
v = \sum_{\alpha \in A} a_\alpha x_\alpha + \sum_{\beta \in B} b_\beta y_\beta + \sum_{\gamma \in C} c_\gamma z_\gamma =
\sum_{\alpha \in A'} a_\alpha' x_\alpha + \sum_{\beta \in B'} b_\beta' y_\beta + \sum_{\gamma \in C'} c_\gamma' z_\gamma. 
\end{equation}
Consider such an expression of $v$ with 
\begin{equation}\label{equation:sizeofexpression}
\vert \setof{\alpha}{a_\alpha \neq 0} \vert + \vert \setof{\beta}{b_\beta \neq 0} \vert + \vert \setof{\gamma}{c_\gamma \neq 0} \vert
\end{equation}
minimal possible. Put $a_\alpha = 0$ for $\alpha \notin A$, $b_\beta = 0$ for $\beta \notin B$, and $c_\gamma = 0$ for $\gamma \notin C$. By symmetry, we can assume that $a_\alpha \neq 0$ for some $\alpha \in A$. Suppose for a contradiction that $\alpha \notin A'$. Since the triple $\ABC[']$ is balanced, $B' \cap C' \subseteq A'$, whence $\alpha \notin B' \cap C'$. Without loss of generality we can assume that $\alpha \notin B'$. If all $a_\alpha, b_\alpha$, and $c_\alpha$ were non-zero, we could replace $c_\alpha z_\alpha$ with $ - c_\alpha x_\alpha - c_\alpha y_\alpha$ and reduce the value of the expression in \eqref{equation:sizeofexpression} which is assumed minimal possible. Thus either $b_\alpha = 0$ or $c_\alpha = 0$ (recall that we assume that $a_\alpha \neq 0$). We will deal with these two cases separately. If $b_\alpha = 0$, then the equality
\begin{equation}\label{equation:case1}
a_\alpha x_\alpha + c_\alpha z_\alpha = c_\alpha' z_\alpha
\end{equation}
must hold true. Since $x_\alpha$ and $z_\alpha$ are linearly independent, it follows from \eqref{equation:case1} that $a_\alpha = 0$ which contradicts our choice of $\alpha$. The remaining case is when $c_\alpha = 0$. Under this assumption we have that
\begin{equation*}
a_\alpha x_\alpha + b_\alpha y_\alpha = c_\alpha' z_\alpha.
\end{equation*}
It follows that 
\begin{equation}\label{equation:case2b}
a_\alpha x_\alpha = c_\alpha' z_\alpha - b_\alpha y_\alpha = - c_\alpha' x_\alpha - (c_\alpha' + b_\alpha) y_\alpha.
\end{equation}  
Since $x_\alpha$ and $y_\alpha$ are linearly independent, we infer from \eqref{equation:case2b} that $a_\alpha = - c_\alpha' = b_\alpha$. Then 
we could reduce the value of \eqref{equation:sizeofexpression} by replacing $a_\alpha x_\alpha + b_\alpha y_\alpha$ with $c_\alpha' z_\alpha$ in \eqref{equation:expression}. This contradicts the minimality of \eqref{equation:sizeofexpression}.  
\end{proof}

Combining Lemma~\ref{lemma:Fjoinhomomorphism}, Lemma~\ref{lemma:GF=closure}, and Lemma~\ref{lemma:Fpreservesmeets}, we conclude:

\begin{theorem}\label{theorem:U->Sub} The restrictions $F \restriction \lattice{M}_3[\Pk] \colon \lattice{M}_3[\Pk] \to \Sub(\vectorspace{V})$ and, \emph{a fortiory}, $F \restriction \lattice{S} \colon \lattice{S} \to \Sub(\vectorspace{V})$ are bounded lattice embeddings. In particular, the lattice $\lattice{S}$ is isomorphic to 
a bounded sublattice of the subspace-lattice of a vector space.  
\end{theorem}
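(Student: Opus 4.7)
The plan is to assemble the theorem directly from Lemmas~\ref{lemma:Fjoinhomomorphism}, \ref{lemma:GF=closure}, and \ref{lemma:Fpreservesmeets}, with injectivity being the only new ingredient that must be extracted. Specifically, Lemma~\ref{lemma:Fjoinhomomorphism} already tells us that $F$ is a bounded join-homomorphism on all of $\Pk^3$, so its restriction to $\lattice{M}_3[\Pk]$ preserves bounds and joins (note: the join in $\lattice{M}_3[\Pk]$ is the closure of the coordinate-wise join, but since $F$ factors through $GF$ and closed elements are fixed by $GF$, applying $F$ to the $\lattice{M}_3[\Pk]$-join agrees with applying $F$ to the coordinate-wise join — a point I would spell out using Lemma~\ref{lemma:GF=closure} and the adjunction between $F$ and $G$).

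Next, I would invoke Lemma~\ref{lemma:Fpreservesmeets} to conclude that $F$ preserves meets on balanced triples, using that the meet in $\lattice{M}_3[\Pk]$ is just the coordinate-wise meet by \eqref{equation:meet}. Together with the previous paragraph, this shows that $F \restriction \lattice{M}_3[\Pk]$ is a bounded lattice homomorphism into $\Sub(\vectorspace{V})$.

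For injectivity, the key observation is that by Lemma~\ref{lemma:GF=closure}, the composition $GF$ coincides with the closure operator $\closure{\tuple{-}}$ on $\Pk^3$, and a triple is closed under $\closure{\tuple{-}}$ exactly when it is balanced. Hence for any balanced $\ABC$ we have $GF\ABC = \closure{\ABC} = \ABC$, so $F$ is a section of $G$ on $\lattice{M}_3[\Pk]$ and in particular injective there. Combining these three facts gives that $F \restriction \lattice{M}_3[\Pk] \colon \lattice{M}_3[\Pk] \to \Sub(\vectorspace{V})$ is a bounded lattice embedding.

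Finally, since $\lattice{S} = \lattice{T} \cap \lattice{M}_3[\Fk] \subseteq \lattice{M}_3[\Pk]$ was shown in Lemma~\ref{lemma:Usublattice} to be a bounded sublattice of $\lattice{M}_3[\Fk]$ (and hence of $\lattice{M}_3[\Pk]$, as the bounds $\tuple{\emptyset,\emptyset,\emptyset}$ and $\tuple{\kappa,\kappa,\kappa}$ coincide in both), the restriction $F \restriction \lattice{S}$ is also a bounded lattice embedding, proving the last sentence of the theorem. The only step that needs any care, and which I would treat as the main (minor) obstacle, is the remark that $F$ carries the $\lattice{M}_3[\Pk]$-join — i.e.\ the closure of the coordinate-wise join — to the subspace sum; this is where the identity $F = FGF$ on $\lattice{M}_3[\Pk]$, obtained from Lemma~\ref{lemma:GF=closure} together with the adjunction $F \dashv G$, is essential.
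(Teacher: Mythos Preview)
Your proposal is correct and follows essentially the same approach as the paper, which simply states that the theorem follows by combining Lemmas~\ref{lemma:Fjoinhomomorphism}, \ref{lemma:GF=closure}, and \ref{lemma:Fpreservesmeets}. You have correctly identified and handled the one subtle point the paper leaves implicit, namely that $F$ applied to the $\lattice{M}_3[\Pk]$-join (the closure of the coordinate-wise union) agrees with $F$ applied to the coordinate-wise union itself, via the identity $FGF = F$ coming from the adjunction; your injectivity argument from $GF\restriction \lattice{M}_3[\Pk] = \mathrm{id}$ is likewise the intended one.
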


It is well-known that a distributive lattice $\lattice L$ embeds (via a bounds-preserving lattice embedding) into the lattice $\Pk$, where $\kappa$ is the cardinality of the set of all maximal ideals of $\lattice L$. Such embedding induces an embedding $\lattice{M}_3[\lattice{L}]\hookrightarrow \lattice{M}_3[\Pk]$ (cf. Lemma~\ref{lemma:Usublattice}). By Theorem~\ref{theorem:U->Sub}, the lattice $\lattice{M}_3[\Pk]$ embeds into the lattice $\Sub(\vectorspace{V})$ for a suitable vector space $\vectorspace{V}$ (note again that we now also admit finite $\kappa$). Since the lattice $\Sub (\lattice V)$ is Arguesian, so are $\lattice M_3[\Pk]$ and $\lattice M_3[\lattice L]$. 

On the other hand, \cite[Lemma 2.9]{GW99b} states that a lattice $L$ is distributive if and only if $\lattice M_3[\lattice L]$ is modular. Hence, if $\lattice M_3[\lattice L]$ is modular, it follows that $\lattice L$ is distributive, and, by the above argument, $\lattice M_3[\lattice L]$ is even Arguesian. We have thus proven the following strengthening of \cite[Lemma~2.9]{GW99b}:
\begin{proposition}\label{prop:M3L Arguesian}
	Let $L$ be a lattice. Then $L$ is distributive iff the lattice $\lattice M_3[\lattice L]$ is modular iff $\lattice M_3[\lattice L]$ is Arguesian. If this is the case, then $\lattice M_3[\lattice L]$ can be embedded into the lattice of all subspaces of a suitable vector space over any given field.
\end{proposition}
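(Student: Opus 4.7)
The plan is to establish the proposition by a cycle of three implications. The only implication that requires genuine new work is ``$\lattice{L}$ distributive implies $\lattice{M}_3[\lattice{L}]$ Arguesian''; the other two will be dispatched using elementary lattice theory and \cite[Lemma 2.9]{GW99b}, which is already cited in the paragraph preceding the proposition.

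For the Arguesian direction, I would first invoke the standard fact that every bounded distributive lattice embeds, via a $\{0,1\}$-preserving lattice homomorphism $\varphi \colon \lattice{L} \to \Pk$, where $\kappa$ is the cardinality of the set of maximal filters (equivalently, prime filters) of $\lattice{L}$. Next I would verify that $\varphi$ lifts coordinatewise to a bounded lattice embedding $\widetilde\varphi \colon \lattice{M}_3[\lattice{L}] \to \lattice{M}_3[\Pk]$. This reduces to three routine observations: the coordinatewise map sends balanced triples to balanced triples because $\varphi$ preserves binary meets; meets in $\lattice{M}_3$ are preserved by \eqref{equation:meet}; and joins are preserved because, by \eqref{equation:join} and \eqref{equation:closure}, the join in $\lattice{M}_3$ is assembled from finitely many coordinatewise meets and joins, all of which $\varphi$ respects. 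Injectivity of $\widetilde\varphi$ is inherited from $\varphi$. Composing with the embedding $F \restriction \lattice{M}_3[\Pk] \colon \lattice{M}_3[\Pk] \to \Sub(\vectorspace{V})$ supplied by Theorem~\ref{theorem:U->Sub} (for a vector space $\vectorspace{V}$ over any prescribed field) yields an embedding $\lattice{M}_3[\lattice{L}] \hookrightarrow \Sub(\vectorspace{V})$. Since $\Sub(\vectorspace{V})$ is Arguesian and the class of Arguesian lattices is a variety closed under taking sublattices, $\lattice{M}_3[\lattice{L}]$ is Arguesian, which simultaneously settles the final sentence of the proposition.

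To close the cycle, I invoke the elementary fact that the Arguesian identity formally implies the modular law, and finally the nontrivial direction of \cite[Lemma 2.9]{GW99b}, according to which $\lattice{M}_3[\lattice{L}]$ modular forces $\lattice{L}$ to be distributive. No step is genuinely delicate; the one worth double-checking is that the coordinatewise lift $\widetilde\varphi$ commutes with the closure operator $\closure{\tuple{-}}$, but this is immediate because $\closure{\tuple{-}}$ is built from $\vee$ and $\wedge$ alone, and $\varphi$ preserves both.
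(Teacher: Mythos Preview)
Your proposal is correct and follows essentially the same route as the paper's proof (given in the paragraph immediately preceding the proposition): embed a distributive $\lattice{L}$ into $\Pk$, lift coordinatewise to an embedding $\lattice{M}_3[\lattice{L}]\hookrightarrow\lattice{M}_3[\Pk]$, compose with Theorem~\ref{theorem:U->Sub} to land in $\Sub(\vectorspace{V})$, and close the cycle via ``Arguesian $\Rightarrow$ modular'' and \cite[Lemma~2.9]{GW99b}. Your additional detail on why the coordinatewise lift $\widetilde\varphi$ preserves joins (i.e., commutes with the closure $\closure{\tuple{-}}$) is exactly what the paper's parenthetical ``cf.\ Lemma~\ref{lemma:Usublattice}'' is gesturing at.
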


\bibliographystyle{amsplain}
\bibliography{myabbrev,bibwehrung,bibother}

\end{document}